\def\UseSection{
        \numberwithin{equation}{section}
        \newtheorem{theorem}    {Theorem}[section]
        \DefineTheorems 
}
\newcommand{\black}{\black}
\numberwithin{equation}{section}
\newcommand{\bb}[1]{\mathbb{#1}}
\newcommand{\blank}[1]{}
\newcommand{\E}{\bb E}
\newcommand{\R}{\bb R}
\newcommand{\Z}{\bb Z}
\newcommand{\T}{\bb T}
\renewcommand{\P}{\bb P}
\newcommand{\Acal}   {\mathcal{A}}
\newcommand{\Ccal}   {\mathcal{C}}
\newcommand{\Ecal}   {\mathcal{E}}
\newcommand{\Wcal}   {\mathcal{W}}
\newcommand{\nnb}	{\nonumber \\}
\def\DefineTheorems{
	\newtheorem{lemma}      [theorem] {Lemma}

	\newtheorem{prop}        [theorem] {Proposition}
	\theoremstyle{definition}
	\newtheorem{defn}       [theorem] {Definition}
	\newtheorem{rk}       [theorem] {Remark}
}
\newcommand{\eps}{\varepsilon}
 \title {
   The scaling limit of the weakly self-avoiding walk on a high-dimensional torus
 }
 \author{
    Emmanuel Michta\thanks{Department of Mathematics,
     University of British Columbia,
     Vancouver, BC, Canada V6T 1Z2.
     Michta: \url{https://orcid.org/0000-0001-7222-0422}, {\tt michta@math.ubc.ca}.}
}
  \date{\vspace{-5ex}} 
\begin{document}
\maketitle

\begin{abstract}
We prove that the scaling limit of the weakly self-avoiding walk on a $d$-dimensional discrete torus is Brownian motion on the continuum torus if the length of the rescaled walk is $o(V^{1/2})$ where $V$ is the volume (number of points) of the torus and if $d>4$. We also prove that the diffusion constant of the resulting torus Brownian motion is the same as the diffusion constant of the scaling limit of the usual weakly self-avoiding walk on $\Z^d$. 
This provides further manifestation of the fact that the weakly self-avoiding walk model on the torus does not feel that it is on the torus \emph{up until} it reaches about $V^{1/2}$ steps which we believe is sharp.
\end{abstract}
\section{Introduction and results}

\subsection{Introduction}
A self-avoiding walk (SAW) on any graph is a path on this 
graph that does not visit any vertex more than once. The SAW-model consists in understanding properties of the SAW : How does the number of SAW of length $n$ behaves as a function of $n$? How far does a uniformly chosen length-$n$ SAW go? What is the scaling limit of such walks and how can it be proven to exist? etc. Historically and still today, most of the research on this topic 
focuses on the case where the graph is a lattice and especially the euclidean lattice $\Z^d$ where the 
dimension $d$ plays a key role in the behaviour of this model. In dimension $d>4$ the model is now very well understood following the introduction 
of the \emph{lace-expansion} by Brydges and Spencer 
in 1985 \cite{BS85} in the context of the closely related \emph{weakly self-avoiding walk} (WSAW). 
Hara and Slade developed the lace-expansion to prove that 
the usual self-avoiding walk model 
has \emph{mean-field} behaviour in dimension $d \geq 5$ in the early 90's \cite{HS91,HS92a,HS92b} which means in short that the SAW behaves asymptotically like the simple random walk. It was shown in this context \cite{HS92a,Slad89} that the scaling limit of the SAW on $\Z^d$ is Brownian 
motion with some diffusion constant $D>1$. The scaling limit is also believed to be Brownian motion at the critical dimension $d_c =4$ but with extra $\log$ corrections to the rescaling factors \cite{Dupl86}. Relatively recent rigorous results in that direction have been obtained through renormalization group arguments for the WSAW in \cite{BBS-saw4,BBS-saw4-log}. In lower dimensions the picture is quite different. In two dimension, the scaling limit of the SAW is conjecturally the so-called Schramm-Loewner Evolution with parameter $\kappa = 8/3$. 
And in fact this is known to be the case under the widely open assumption that the scaling limit exists and is 
conformally invariant following the breakthrough work of Lawler, Schramm and Werner \cite{LSW04}. In three dimensions there seems to be no good candidate as of today for the scaling limit and the model more generally remains poorly understood even from the physics standpoint.

Here we study the WSAW model where true SAW are assigned weight $1$ and other walks get penalized for intersecting and have a small but non-zero weight. A more formal definition is given in the next subsection. We are interested in the model on a $d$-dimensional torus (a box with periodic boundary conditions) of sidelength $r\geq 3$ where $d > 4$. It was pointed out and partly proven in \cite{MS22} that the WSAW on the torus behaves the same as its $\Z^d$ counterpart provided that its length is less than $r^{d/2}$ and that this should be sharp. In this article we study the scaling limit of the WSAW on the torus which exhibits again this interplay between $r$ and $n$ the length of the WSAW when both go to infinity. We show that the scaling limit of the WSAW is Brownian motion on the continuum torus if the walk that we rescale has length less than $r^{d/2}$ and more than $r^2$, which is possible only if $d>4$. If the walk has length much less than $r^2$ it is not possible to scale both the discrete torus into a continuum one and the walk into a continuous non-degenerate random curve and the model exhibits no interesting behaviour. Our main tool to prove the above is the lace expansion. Since its introduction in \cite{BS85}, the lace expansion technology has been simplified and explained by several authors \cite{MS93,Slad06,BHK18} (to which we refer for further background) as well as applied to a broad range of models (percolation, Ising, $\phi^4$, etc.) above their critical dimensions.

\subsection{Notation}

We write $f \sim g$ to mean $\lim f/g =1$, $f\prec g$ to mean $f \leq c_1 g$ with $c_1>0$ and $f \succ  g$ to mean  $g \prec  f$. We also write $f \asymp g$ when $ g \prec f \prec g$. We also write $ f \ll g$ if $f/g = o(1)$ and $f \gg g$ if $ g \ll f $. For any vector $u \in \R^d$, $|u|$ denotes the $L^2$ norm of $u$.
Constants are permitted to depend on $d$.

\subsection{The weakly self-avoiding walk}

An $n$-step walk on $\Z^d$ is a function $\omega :\{0,1,\ldots,n\} \to \Z^d$ with $\|\omega(i)-\omega(i-1)\|_1=1$ for $1 \le i \le n$.
We let $\Wcal_n$ denote the set of $n$-step walks starting at $0$.
For an $n$-step walk $\omega$, and for $0 \leq s < t \leq n$, we define
\begin{equation}
\label{e:Ustdef}
    U_{st}(\omega) 
    =
    \begin{cases}
        -1 & (\omega(s)=\omega(t)) \\
        0 & (\omega(s)\neq \omega(t))
    \end{cases}
\end{equation}
and 
\begin{equation}
\label{e:K_def}
	K[0,n] = \prod_{0 \leq s < t\leq n}(1+\beta U_{st}(\omega))
\end{equation}
where we keep the dependency on $\omega$ and $\beta$ implicit to simplify the notation. Given $\beta \in [0,1]$, we define the
\emph{partition function} $c_n = c_n(\beta)$ by
\begin{equation}
\label{e:cndef}
    c_n = \sum_{ \omega \in \Wcal_n } K[0,n].
\end{equation}
The product in \eqref{e:K_def}
discounts $\omega$ by a factor
$1-\beta$  for each pair $s,t$ with an intersection $\omega(s)=\omega(t)$.
When $\beta=0$, $c_n$ is simply the number of $n$-step walks and is thus equal to $(2d)^n$. For $\beta=1$, $c_n$ is the number of $n$-step strictly self-avoiding walks.
The case $\beta \in (0,1)$ is the \emph{weakly self-avoiding walk}.
In particular, it is proved in \cite{HHS98}
that for $d>4$ with $\beta>0$ sufficiently small
\begin{equation}
\label{e:cnasy}
    c_n = A\mu^n (1+O(\beta n^{-(d-4)/2})),
\end{equation}
with $A = 1+O(\beta)$ and where $\mu = \mu(\beta,\Z^d)$ is the WSAW \emph{connective constant of $\Z^d$}. In addition, the \emph{mean-square displacement} defined by the left-hand side of \eqref{e:mean_square_disp} satisfies
\begin{equation}
\label{e:mean_square_disp}
	\frac{1}{c_n}\sum_{\omega \in \Wcal_n}|\omega(n)|^2 K[0,n] = Dn(1+O(n^{-1/2}))
\end{equation}
where $D = D(\beta)>1$ is some constant called the \emph{diffusion constant} of the WSAW. It is also known (see \cite[Theorem 6.1.8]{MS93}) that the scaling limit of the weakly self-avoiding walk is Brownian motion in dimension $d>4$ and that the diffusivity of the limiting Brownian motion is equal to $D$.
It is widely acknowledged even though conjectural that the asymptotic behaviour of the WSAW  and the SAW are the same in all dimensions and thus that both models lie in the same universality class.

There is a natural probability measure on $\Wcal_n$ denoted $\P_{\beta, n}$ that comes with the WSAW model. If we let $\E_{\beta,n}$ be its corresponding expectation, then it is defined by 
\begin{equation}
	\E_{\beta,n}\,f = \frac{1}{c_n}\sum_{\omega \in \Wcal_n}f(\omega)K[0,n]
\end{equation}
for any  bounded function $f\colon \Wcal_n \to \R$. We write $\omega \sim \beta$-WSAW$_n$ to mean that $\omega$ has law $\P_{\beta,n}$.

We are mostly interested in walks on a discrete torus $\T_r^d = (\Z/r\Z)^d$ for an integer $r \ge 3$ which will be taken to be large.
The \emph{volume} of the torus is $V=r^d$. Torus walks are defined as on $\Z^d$ but with steps $\omega(i)-\omega(i-1)$
computed using addition modulo $r$ in each one of the $d$ components. We let $\Wcal_n^\T$ denote the set of $n$-steps torus walks starting at $0$. In the same way as on $\Z^d$, we define torus quantities
\begin{align}
\label{e:cnTdef}
    c_n^\T 
    = \sum_{\omega\in \Wcal_n^\T} K[0,n],\qquad
    \E_{\beta,n,r}^\T \,g 
    = \frac{1}{c_n^\T}\sum_{\omega\in \Wcal_n^\T}g(\omega) K[0,n],
\end{align}
for any $g\colon \Wcal_n^\T \to \R$. We write $\omega \sim \beta$-WSAW$^{\T}_{n,r}$ to mean that $\omega$ has law $\P^\T_{\beta,n,r}$.

\subsection{Main result}
We introduce some notations before rescaling the walk and stating our main result. Let $\T^d = (\R/\Z)^d$, $T>0$ possibly infinite and $\Ccal^0_0([0,T],\T^d)$ be the space of continuous functions from $[0,T]$ to $\T^d$ starting from $0$ and if $T=\infty$ then we instead take $\Ccal^0_0([0,\infty),\T^d)$.
For a torus walk $\omega \in \Wcal_n^\T$ with $n \ll V^{1/2}$ and $n/r^2 \uparrow T$ ($T=\infty$ is allowed), we define the mapping $x^{(n)}$ from $\Wcal^\T_{n}$ to $\Ccal^0_0([0,T],\T^d)$ which is the proper rescaling of the torus walk by 
\begin{align}
\label{e:x_def}
	x^{(n)}_{k/r^2}(\omega) 
	&= \frac{\omega(k)}{r} \text{ for } k \in \{0,1, \cdots, n \} ,
\end{align}
where we see $\frac{\omega(k)}{r}$ as an element of $\T^d$ rather than $r^{-1}\T_r^d$ by a canonical injection. We then let $x_t^{(n)}(\omega)$ interpolate linearly between the above values up to $t = n/r^2$ and $x_t = x_{n/r^2}$ for $t \in [n/r^2,T]$ (respectively in $[n/r^2,\infty)$ if $T=\infty$). 
We define the stopped process $B^T = (B_t)_{0\leq t \leq T}$ where $B$ is an $\R^d$-Brownian motion defined on some probability space $(\Omega,\Acal,\P)$ satisfying
\begin{equation}
	\E\,|B_1|^2 = D.
\end{equation} 
So the diffusion constant of all the Brownian motions appearing in what follows is taken equal to $D$ which is defined in \eqref{e:mean_square_disp}. 
In the case $T = \infty$, $B^T$ is simply $B$.
We defer the precise definition of convergence in law and continuity to Section~\ref{sec:set-up}. Our main result is the following:

\begin{theorem}
\label{thm:main}
	Let $d>4$, $\beta$ small enough. If $\omega \sim \beta$-{\rm WSAW}$^\T_{n,r}$ with $n = n_r$ satisfying $n \ll V^{1/2}$ and $n/r^2 \uparrow T>0$ (possibly $\infty$) then $X^{(n)}= x^{(n)}(\omega)$ converges in law to a Brownian motion on the torus with diffusion constant $D$ and length $T$. And $D$ is the usual $\Z^d$-diffusion constant defined in \eqref{e:mean_square_disp}. In other words, for any continuous and bounded functional $f$ on $\Ccal^0_0([0,T],\T^d)$ we have
\begin{equation}
	\E_{\beta,n,r}^\T\, f(X^{(n)}) 
	\to 
	\E\,f(B^T \hspace{-0.25cm}\mod 1) \text{ as } r \to \infty.
\end{equation}
\end{theorem}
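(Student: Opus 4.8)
The plan is to \textbf{lift the torus walk to $\Z^d$}, import the $\Z^d$ scaling limit of \cite{MS93}, and project back, using the torus-versus-$\Z^d$ comparison (in the spirit of \cite{MS22}) to control the difference. Fix $r\ge 3$. Every $\omega\in\Wcal_n^\T$ has a unique lift $\tilde\omega\in\Wcal_n$ obtained by reading off the unit steps $\omega(k)-\omega(k-1)$ (unambiguous since $r\ge 3$); $\omega\mapsto\tilde\omega$ is a bijection $\Wcal_n^\T\to\Wcal_n$, and $\omega$ is the reduction of $\tilde\omega$ modulo $r$. The rescalings are compatible with passage to the quotient: the $\R^d$-valued path $y^{(n)}(\tilde\omega)$ defined by $y^{(n)}_{k/r^2}(\tilde\omega)=\tilde\omega(k)/r$ (linear interpolation, then constant on $[n/r^2,T]$) reduces coordinatewise modulo $1$ to $x^{(n)}(\omega)$. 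Write $\pi_\ast\colon\Ccal^0_0([0,T],\R^d)\to\Ccal^0_0([0,T],\T^d)$ for this reduction; it is $1$-Lipschitz, hence continuous, for the uniform metric of Section~\ref{sec:set-up}. Then $X^{(n)}=\pi_\ast(Y^{(n)})$ with $Y^{(n)}\bydef y^{(n)}(\tilde\omega)$, and for any bounded continuous $f$ on $\Ccal^0_0([0,T],\T^d)$ the function $\bar f\bydef f\circ\pi_\ast$ is bounded and continuous with $f(X^{(n)})=\bar f(Y^{(n)})$.

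First I would \textbf{reduce the torus expectation to a $\Z^d$ expectation}. Under the lifting bijection $K^\T[0,n]=K[0,n]\cdot\!\!\prod_{\text{wrap}}\!(1-\beta)$, the product running over the ``wrap-around'' pairs $0\le s<t\le n$ with $\tilde\omega(t)-\tilde\omega(s)\in r\Z^d\setminus\{0\}$; in particular $0\le K^\T[0,n]\le K[0,n]\le 1$ pointwise. Expanding $\E^\T_{\beta,n,r}f(X^{(n)})=\frac1{c_n^\T}\sum_{\tilde\omega}\bar f(Y^{(n)})K^\T[0,n]$ and $\E_{\beta,n}\bar f(Y^{(n)})=\frac1{c_n}\sum_{\tilde\omega}\bar f(Y^{(n)})K[0,n]$, and using $\sum_{\tilde\omega}K[0,n]=c_n$ together with $0\le K^\T\le K\le 1$, one gets
\[
\bigl|\E^\T_{\beta,n,r}f(X^{(n)})-\E_{\beta,n}\bar f(Y^{(n)})\bigr|\;\le\;2\,\|f\|_\infty\Bigl(\tfrac{c_n}{c_n^\T}-1\Bigr).
\]
So everything reduces to the \textbf{comparison estimate} $c_n^\T=c_n(1+o(1))$ for $n\ll V^{1/2}$. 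Since $\beta c_n\,\P_{\beta,n}[\exists\text{ wrap pair}]\le c_n-c_n^\T\le\beta c_n\,\E_{\beta,n}[\#\text{wrap pairs}]$, this amounts to showing that an $n$-step $\Z^d$-WSAW with $n\ll V^{1/2}$ has, with probability $1-o(1)$, no pair of times whose displacement lies in $r\Z^d\setminus\{0\}$; heuristically $\E_{\beta,n}[\#\text{wrap pairs}]=\sum_{s<t}\P_{\beta,n}[\tilde\omega(t)-\tilde\omega(s)\in r\Z^d\setminus\{0\}]\prec n^2/V=o(1)$, and making this rigorous and uniform over $n\ll V^{1/2}$ is what the (torus) lace expansion does, building on \cite{MS22}.

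Granting $c_n^\T\sim c_n$, it remains to \textbf{identify the $\Z^d$ limit}. With $a_n\bydef n/r^2\uparrow T$ one has $Y^{(n)}_t=\sqrt{a_n}\,Z^{(n)}_{t/a_n}$ up to the linear interpolation, where $Z^{(n)}_s\bydef\tilde\omega(\lfloor ns\rfloor)/\sqrt n$ is the standard diffusive rescaling on $[0,1]$ of an $n$-step $\beta$-WSAW; by \cite[Theorem~6.1.8]{MS93}, $Z^{(n)}\Rightarrow B$ with $\E|B_1|^2=D$ ($D$ the $\Z^d$ diffusion constant of \refeq{mean_square_disp}), whence $Y^{(n)}\Rightarrow(\sqrt T\,B_{t/T})_{0\le t\le T}$, a Brownian motion of length $T$ with $\E|(\cdot)_t|^2=Dt$, i.e.\ $B^T$ in law (for $T=\infty$, exhaust $[0,\infty)$ by compact time-intervals and apply the same input to get $Y^{(n)}\Rightarrow B$). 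Since $\bar f=f\circ\pi_\ast$ is bounded and continuous, $\E_{\beta,n}\bar f(Y^{(n)})\to\E\,\bar f(B^T)=\E\,f(B^T\bmod 1)$, and combining with the display above yields $\E^\T_{\beta,n,r}f(X^{(n)})\to\E\,f(B^T\bmod 1)$, as claimed. (Tightness of $\{X^{(n)}\}$ in $\Ccal^0_0([0,T],\T^d)$, needed for the notion of convergence in law of Section~\ref{sec:set-up}, follows from that of $\{Y^{(n)}\}$ through the $1$-Lipschitz map $\pi_\ast$; alternatively from a fourth-moment bound on increments of the torus walk.)

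\textbf{Main obstacle.} The heart of the matter is the comparison estimate $c_n^\T\sim c_n$ throughout $n\ll V^{1/2}$: controlling the wrap-around self-intersection contribution uniformly requires the torus lace expansion together with sharp (Gaussian-type) bounds on $\Z^d$-WSAW displacement probabilities, and $n\asymp V^{1/2}$ is precisely where the error $O(n^2/V)$ ceases to be $o(1)$ — so this step is simultaneously what makes the theorem true and what pins down its range of validity. A secondary point, routine but worth stating carefully in Section~\ref{sec:set-up}, is the reformulation of the $\Z^d$ scaling limit in the present regime, where $n$ and $r$ both tend to infinity with $n/r^2\to T\le\infty$ rather than the walk length being the sole parameter.
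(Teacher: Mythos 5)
Your overall skeleton matches the paper's: lift the torus walk bijectively to $\Z^d$ (possible since $r\ge 3$), compare the torus and $\Z^d$ expectations via $0\le K^\T[0,n]\le K[0,n]$ and the ratio $c_n/c_n^\T$, and project the $\Z^d$ limit back to $\T^d$ through the (continuous, indeed homeomorphic) reduction modulo $1$. Your bound $|\E^\T_{\beta,n,r}f(X^{(n)})-\E_{\beta,n}\bar f(Y^{(n)})|\le 2\|f\|_\infty(c_n/c_n^\T-1)$ is exactly the mechanism of Lemma~\ref{lem:implication_conv}. One difference of emphasis: you propose to re-derive $c_n^\T\sim c_n$ from a wrap-around intersection count, whereas the paper simply quotes Theorem~\ref{thm:dilute} (proved in \cite{MS22}) as an external input; your first and second moment bounds on wrap pairs are the right heuristic, and you correctly note that making them uniform for $n\ll V^{1/2}$ requires the torus lace expansion, so no objection there. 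What you label the ``main obstacle'' is therefore already available off the shelf.

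The genuine gap is in the step you call routine: identifying the $\Z^d$ limit of $Y^{(n)}$. Writing $Y^{(n)}_t=\sqrt{a_n}\,Z^{(n)}_{t/a_n}$ with $a_n=n/r^2$ and invoking $Z^{(n)}\Rightarrow B$ from \cite[Theorem 6.1.8]{MS93} works when $T<\infty$ (then $a_n\to T$ and the map is a fixed rescaling up to a vanishing perturbation), but it fails for $T=\infty$, which is the case the paper actually proves and the only hard one. On a fixed compact $[0,T_0]$ you need the first $\lfloor r^2T_0\rfloor=o(n)$ steps of the $n$-step walk, blown up by the diverging factor $\sqrt{a_n}$; weak convergence of $Z^{(n)}$ in the uniform topology gives no control at spatial scale $a_n^{-1/2}$ and temporal scale $a_n^{-1}$ near the origin (an $o(1)$ sup-norm perturbation of $Z^{(n)}$ becomes an unbounded perturbation of $\sqrt{a_n}Z^{(n)}_{\cdot/a_n}$). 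Nor can you substitute the $m$-step scaling limit with $m=\lfloor r^2T_0\rfloor$, because the marginal law of the first $m$ steps of an $n$-step WSAW is not that of an $m$-step WSAW: the interaction couples the initial segment to the rest of the walk. The paper states explicitly that this convergence ``is not known'' and devotes Section~2.3 to it: the finite-dimensional distributions are handled by a KJK (lace-expansion) factorization \eqref{e:KJK} that decouples the walk at time $\lfloor k_nt_N\rfloor$ and shows the tail's influence is negligible via the diagrammatic bound \eqref{e:pi_bound} (Lemma~\ref{lem:fdd}), and tightness of $Y^{(n)}$ at the $r$-scale is proved separately from the mean-square displacement bound \eqref{e:mean_square_disp} (Lemma~\ref{lem:tightness}); your appeal to ``tightness of $\{Y^{(n)}\}$'' likewise cannot be imported from \cite{MS93} in this regime. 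This missing argument is the technical core of the paper, not a reformulation exercise.
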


Theorem \ref{thm:main} follows easily from the classical fact that the scaling limit of the strictly self-avoiding walk is Brownian motion (see \cite[Theorem 6.1.8]{MS93}) with diffusion constant $D$ and from the following more recent result proven in \cite{MS22} which itself carries more technicality than the proof of Theorem \ref{thm:main}.
\begin{theorem}
\label{thm:dilute}
For $d>4$ and $C_0>0$, if $\beta >0$ is sufficiently small and
if
$n \le  C_0 V^{1/2}$, then
\begin{equation}
\label{e:cnmr}
    c_{n}^{\T} = A\mu^n
    \left[ 1+O(\beta)\Big( \frac{1}{n^{(d-4)/2}} + \frac{n^2}{V}\Big) \right]
    ,
\end{equation}
where $A$ is the same constant as in \eqref{e:cnasy} and the error term depends on $C_0$ but not on $n,V,\beta$. In particular $c_n \sim c^\T_n$ for $n=o(V^{1/2})$ going to infinity.
\end{theorem}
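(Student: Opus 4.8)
\medskip
\noindent\emph{Proof strategy.}
The plan is to reduce everything to walks on $\Z^d$ via the canonical unfolding bijection and then to extract the torus correction as a sum over ``wrap-around'' self-intersections, which can be controlled by the $\Z^d$ two-point function. Since $r \ge 3$, each torus step $\pm e_i$ lifts unambiguously to $\Z^d$, so reduction modulo $r$ is a bijection $\Wcal_n \leftrightarrow \Wcal_n^\T$. Under it, a lifted walk $\omega \in \Wcal_n$ carries two competing intersection variables: the genuine one $U_{st}(\omega) = -\indicthat{\omega(s)=\omega(t)}$ and the torus one $U^\T_{st}(\omega) = -\indicthat{\omega(s)\equiv\omega(t)\bmod r}$. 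Writing $W_{st}(\omega) = -\indicthat{\omega(t)-\omega(s)\in r\Z^d\setminus\{0\}}$ for the extra wrap-around intersections and using that $U_{st}$ and $W_{st}$ are never simultaneously nonzero, one gets the identity $U^\T_{st} = U_{st} + W_{st}$ and hence the factorization $1+\beta U^\T_{st} = (1+\beta U_{st})(1+\beta W_{st})$, so that
\begin{equation}
c_n^\T = \sum_{\omega\in\Wcal_n} K[0,n]\prod_{0\le s<t\le n}(1+\beta W_{st}).
\end{equation}
Because each factor $1+\beta W_{st}\in[0,1]$, this immediately gives the easy inequality $c_n^\T \le c_n$.

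For the matching lower bound I would use the Weierstrass product inequality $\prod_{s<t}(1+\beta W_{st}) \ge 1 + \beta\sum_{s<t}W_{st}$, valid since $\beta\in[0,1]$ and $W_{st}\ge -1$; this sidesteps any need to control the higher-order terms of the expansion and yields
\begin{equation}
0 \le c_n - c_n^\T \le \beta\sum_{0\le s<t\le n}\sum_{\omega\in\Wcal_n} K[0,n]\,\indicthat{\omega(t)-\omega(s)\in r\Z^d\setminus\{0\}}.
\end{equation}
Next I would bound the constrained sum by the $\Z^d$ two-point function. Dropping the cross-interactions between the three segments $[0,s]$, $[s,t]$, $[t,n]$ only increases the sum (each discarded factor lies in $[0,1]$) and factorizes it as $c_s\,\big(\sum_{v\in r\Z^d\setminus\{0\}} c_{t-s}(v)\big)\,c_{n-t}$, where $c_m(v) = \sum_{\omega\in\Wcal_m,\,\omega(m)=v} K[0,m]$. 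Using $c_m \asymp \mu^m$ from \eqref{e:cnasy} to cancel the prefactors against $c_n$ reduces the whole estimate to
\begin{equation}
\frac{c_n - c_n^\T}{c_n} \prec \beta\sum_{m=1}^{n}(n-m)\,\P_{\beta,m}\big[\omega(m)\in r\Z^d\setminus\{0\}\big].
\end{equation}

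The remaining work, and the main obstacle, is to show this sum is $O(n^{-(d-4)/2} + n^2/V)$ uniformly for $n\le C_0 V^{1/2}$. Here the lace expansion enters as the essential input: I would invoke the Gaussian upper bound $\P_{\beta,m}[\omega(m)=v] \prec m^{-d/2} e^{-c|v|^2/m}$ on the WSAW endpoint distribution, valid for $d>4$ and small $\beta$, which refines the diffusive estimate \eqref{e:mean_square_disp} and is a standard output of the lace expansion. Summing this Gaussian over $v\in r\Z^d\setminus\{0\}$ and splitting the $m$-sum at the crossover $m\asymp r^2$ produces two contributions: the short-range regime $m\ll r^2$, where the wrap probability is exponentially small in $r^2/m$ and the total is $\prec n\,r^{2-d}$, and the long-range regime $m\gg r^2$, where the return probability equilibrates to $\asymp 1/V$ and the total is $\asymp n^2/V$. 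One checks that $n\,r^{2-d} \prec n^{-(d-4)/2} + n^2/V$ throughout the range $n\le C_0 V^{1/2}$, so both error terms of the theorem appear naturally. Finally, combining $0\le (c_n - c_n^\T)/c_n = O(\beta)(n^{-(d-4)/2} + n^2/V)$ with $c_n = A\mu^n(1+O(\beta n^{-(d-4)/2}))$ from \eqref{e:cnasy} yields the claimed expansion. The genuinely delicate points are importing the sharp pointwise Gaussian bound (the deep lace-expansion ingredient) and carrying out the uniform crossover bookkeeping across the regime $m\approx r^2$.
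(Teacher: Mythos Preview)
First, note that the paper does not itself prove this theorem: it is quoted from \cite{MS22}, and the paper explicitly remarks that its proof ``carries more technicality than the proof of Theorem~\ref{thm:main}.''  So your proposal is to be compared with what is done in \cite{MS22}.

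Your reduction is clean and correct through the Weierstrass step: the factorisation $1+\beta U^\T_{st}=(1+\beta U_{st})(1+\beta W_{st})$ holds because $U_{st}W_{st}=0$, the product inequality gives
\[
0\;\le\;c_n-c_n^\T\;\le\;\beta\sum_{0\le s<t\le n}\sum_{\omega\in\Wcal_n} K[0,n]\,\indicthat{\omega(t)-\omega(s)\in r\Z^d\setminus\{0\}},
\]
and dropping the cross-interactions to factorise into three segments is legitimate.

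The gap is the Gaussian heat-kernel bound
\[
\P_{\beta,m}\big[\omega(m)=v\big]\;\prec\; m^{-d/2}\,e^{-c|v|^2/m},
\]
which you call ``a standard output of the lace expansion.''  It is not.  For WSAW in $d>4$ with small $\beta$, the lace expansion gives sharp control in Fourier space, from which one extracts the uniform $L^\infty$ bound $c_m(v)\prec\mu^m m^{-d/2}$ and all polynomial moment bounds $\E_{\beta,m}|\omega(m)|^{2k}\prec m^k$, but a pointwise Gaussian tail in $x$-space is a genuinely stronger statement that is not available in the standard references.  And without the exponential tail your estimate collapses: the uniform bound alone, summed over the $\asymp(m/r)^d$ points of $r\Z^d$ within reach of an $m$-step walk, yields only $\P_{\beta,m}[\omega(m)\in r\Z^d\setminus\{0\}]\prec m^{d/2}/V$, whence $\sum_{m\le n}(n-m)\,m^{d/2}/V\prec n^{(d+4)/2}/V$, which is of order $V^{d/4}$ at $n\asymp V^{1/2}$ rather than $O(1)$.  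Combining the $L^\infty$ bound with high moments recovers at best $n^2/V^{1-\epsilon}$ for arbitrary $\epsilon>0$, still not the theorem as stated.  So the Gaussian bound is doing essential work in your scheme and would itself require a substantial proof.

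The route in \cite{MS22} is different: one runs the lace expansion directly on the torus to produce torus coefficients $\pi^\T_m$, and then compares $\pi^\T_m$ with the $\Z^d$ coefficients $\pi_m$ diagram by diagram.  As the present paper itself states in its discussion, the comparison is driven by the \emph{plateau} estimate for the torus two-point function from \cite{Slad20_wsaw}, and the $n^2/V$ correction emerges from summing these differences.  Your inclusion--exclusion route is conceptually simpler, but it trades the delicate torus-versus-$\Z^d$ diagram comparison for a single pointwise heat-kernel estimate on $\Z^d$ that is at least as hard to supply.
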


 Since we are considering the WSAW rather than the usual (strictly) SAW, the results in \cite{MS93} do not apply directly but as we discuss above Lemma \ref{lem:fdd} it is elementary to adapt them to the easier case of the WSAW.

\subsection{Discussion}

In fact we shall only prove the case $T=\infty$ which is the most interesting and from which the proof can be easily adapted
 to cover the easier case $T>0$ and finite. We exclude the case $T = 0$ because then the rescaling in \eqref{e:x_def} is 
 degenerate in the sense that the torus is properly scaled whereas the rescaled walk shrinks to $0$ as is noted in Remark \ref{rmk:degener_lim}.  
 
 We refer to the regime of values of $n$ for which the limiting behaviour (such as the scaling limit) of the torus WSAW is the same as on $\Z^d$ as the subcritical regime or dilute 
phase. It corresponds conjecturally to walks having length $o(V^{1/2})$. On the other hand it would be interesting to determine what the scaling limit of the walk is when its length is of order $V^{1/2}$ (critical regime) or much larger
  than $V^{1/2}$ (supercritical regime or dense phase). In the critical case we expect the scaling limit to be Brownian motion with a different diffusion constant than on $\Z^d$. The above 
  conjectural distinction of regimes : $n \ll V^{1/2}$, $n\asymp V^{1/2}$ and $n \gg V^{1/2}$ has been first identified 
  for the simpler and solvable case of the SAW on the 
  complete graph in \cite{Slad20} (see also \cite{MS22,Slad22} for progress on the torus and the hypercube). 
  In order to establish a version of Theorem~\ref{thm:main} for the critical regime we would need at least to understand better how $c_n^\T$ behaves when $n \asymp V^{1/2}$. 
In this regime we do not expect $c_n^\T$ to be equivalent to $c_n$ when $n \to \infty$. This  prevents the proof contained in this article to apply directly in this case and is a manifestation of the fact that new ideas are required to understand the critical regime as was already noted in \cite[Remark 5.3]{MS22}. 

We end this subsection by discussing a closely related open problem. Recall that the torus and $\Z^d$ WSAW two-point functions are given by 
\begin{equation}
	G^{*}_z(x) = \sum_{\omega \in \Wcal^*(x)}z^{|\omega|}K[0, |\omega|]
\end{equation}
where $`` * "$ is either replaced by $\T$ for torus walks or nothing for $\Z^d$ walks. The sum is over torus or $\Z^d$ walks starting from $0$ and ending at $x$. The torus and $\Z^d$ susceptibilities are defined similarly by $\chi^*(x) = \sum_x G^*_z(x)$ and the sum is either over $x$ in $\T_r^d$ or in $\Z^d$. The \emph{critical window} corresponds to the $r$-dependent values of $z$ for which $\chi^\T(z) \asymp V^{1/2}$. We let $z_c$ denote the \emph{critical point} for the $\Z^d$-WSAW model i.e. the radius of convergence of $\chi(z)$.  The proof of Theorem \ref{thm:dilute} uses the key estimate of \cite[Theorem 1.2]{Slad20_wsaw} called the \emph{plateau} of the torus two-point function critical window (see \cite{Slad20,MS22,Slad20_wsaw,GEZGD17, DGGZ22} for background on the plateau, the critical window and simulations in the case of SAW). The plateau phenomenon states that inside the critical window $G^\T_{z}(x)$ decays like $G_{z_c}(x) \asymp |x|^{2-d}$ (when $d>4$) for small values of $x$ up until it reaches a constant value of order $V^{-1/2}$ where it levels off for larger values of $x$.
Some limitations encountered in \cite{MS22} come from the fact that the plateau is not known to hold inside the whole critical window.
It would thus be a natural first step to find a way to extend the plateau of \cite[Theorem 1.2]{Slad20_wsaw} to the entire critical window. This would lead to a better understanding of the critical regime for WSAW on the torus. We note that similar plateaux have been obtained for the simple random walk \cite{Slad20_wsaw, ZGFDG18, ZGDG20} when $d>2$, for percolation on a high-dimensional torus \cite{HMS22} and in part for Ising \cite{Papa06,ZGFDG18, ZGDG20} when $d>4$.
  
\section{Scaling limit on the torus}
\subsection{Set-up}
\label{sec:set-up}
For any $r\geq 3$, we recall that $\T_r^d$ is the $d$-dimensional discrete $r$-torus, that is  $\T_r^d = (\Z/r\Z)^d$. We let $\T^d = (\R/\Z)^d$ be the continuum torus. We will often see the discrete $r$-torus and the continuum torus as  $[-r/2,r/2)\cap \Z^d$ with addition modulo $r$ and as $[-1/2,1/2)$ with addition modulo $1$ respectively. For $x\in \T^d$ we denote by $(x)_1$ (for $x\in \T_r^d$ we denote by $(x)_r$ respectively) the unique representative in the equivalence class of $x$ that is inside $[-\frac12, \frac12)$ (inside $[-\frac{r}{2}, \frac{r}{2})$ respectively). This is useful to canonically embed $\T_r^d$ into $\Z^d$ and $\T^d$ in $\R^d$. In the rest of the article we will mention convergence in law on several metric spaces that we now define.  We let	
\begin{itemize}
\item $\Ccal^{0}(\R^d) = (\Ccal^0_0([0,\infty),\R^d),\|\cdot\|_\infty)$ be the metric space of continuous function $x$ from $[0, \infty)$ to $\R^d$ with $x(0) = 0$ endowed with the topology of uniform convergence. By abuse of notation we shall also denote by $\Ccal^{0}(\R^d)$ the corresponding measurable space with its associated Borel $\sigma$-algebra. 
\item $\Ccal^{0}(\T^d) = (\Ccal_0^0([0,\infty),\T^d),\|\cdot\|_\infty)$ is defined similarly with $\|x\|_\infty = \sup_{t \geq 0}{|(x(t))_1|}$ and the corresponding topology and Borel $\sigma$-algebra.
\end{itemize}
We denote by $\Rightarrow$ the convergence in law (see \cite[Chapter 1]{Bill95}) of probability measures on any of the above metric spaces without distinction since it shall be clear from context where the convergence holds.

In order to compare walks in $\T_r^d$ and in $\Z^d$ we introduce \emph{the lift $\hat \ell$ of a
torus walk} which is to be thought of as the unwrapping to $\Z^d$ of a walk on $\T_r^d$. The occurence of the hat on top of the $\ell$ is to make explicit the distinction between the lift of walks and the lift of processes which is introduced in Proposition \ref{prop:lift_hom}.
\begin{defn}[The $\Z^d$-lift of torus walks]
We define the \emph{lift} $\hat \ell$ of a torus walk by the operator

\begin{align} 
	\hat \ell\colon \Wcal_n^\T &\rightarrow \Wcal_n \\
	 \omega &\mapsto \hat \ell[\omega]\nonumber
\end{align}
which to any $\omega \in \Wcal_n^\T$ associates the $\Z^d$-walk $\hat \ell[\omega]$ defined by $\hat \ell[\omega](0)=0$ and
\begin{equation}
\label{e:liftdef}
	\hat \ell[\omega](k) = \hat \ell[\omega](k-1) + (\omega(k) - \omega(k-1))_r \qquad (1\leq k \leq n).
\end{equation}
Due the nearest-neighbour constraint, when $r \geq 3$ the map $\hat \ell$ is a bijection from $\Wcal_n^\T$ onto $\Wcal_n$. 
\end{defn}

This bijection has the following consequence that $c_n^{\T}$ in \eqref{e:cnTdef} can be rewritten as a sum over $\Z^d$-walks  where an intersection occur if the walk visits two points that are equal modulo $r$ (in every coordinate). This gives
\begin{equation}
\label{e:cn_T_other_def}
	c_n^\T = \sum_{\omega \in \Wcal_n}K^\T[0,n],\quad \, \text{ where} \qquad  K^\T[0,n] = \prod_{0 \leq s < t \leq n}(1+U^\T_{st}(\omega))
\end{equation}
and with $U_{st}^\T$ defined by
\begin{equation}
	    U^\T_{st}(\omega) =
    \begin{cases}
        -1 & (\omega(s)=\omega(t) \mod r)
        \\
        0 & (\omega(s)\neq \omega(t) \mod r).
    \end{cases}
\end{equation}

Similarly we extend the definition of the lift to the setting of processes instead of walks but since it requires some additional yet elementary work we make it a proposition. The following proposition is useful because it allows us to partially convert the problem of the scaling limit on the torus to a similar problem on $\Z^d$ where the theory is already developed.
\begin{prop}[The $\R^d$-lift of torus processes]
\label{prop:lift_hom}
We define the \emph{lift} of a process to be the linear operator
\begin{align} 
	\ell\colon\Ccal^{0}(\T^d) &\rightarrow \Ccal^{0}(\R^d) \\
	  x &\mapsto \ell[x]\nonumber
\end{align}
which to any function $x \in \Ccal^{0}(\T^d)$ associates $y = \ell[x]$ the unique continuous $\R^d$ function $(y(t))_{t \geq 0}$ starting at $0$ such that $x(t) = y(t) \mod 1$ for all $t\geq 0$. Furthermore the lift $\ell$ of a torus process is a linear homeomorphism of $\Ccal^{0}(\T^d)$ onto its range $\Ccal^{0}(\R^d)$.
\end{prop}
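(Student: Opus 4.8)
The plan is to verify the three asserted properties in turn: (i) that $\ell[x]$ is well-defined and unique, (ii) that $\ell$ is linear, and (iii) that $\ell$ is a homeomorphism onto its range $\Ccal^0(\R^d)$. For existence and uniqueness in (i), I would use the standard lifting theory for the covering map $\pi\colon\R^d\to\T^d$, $\pi(y)=y\bmod 1$: since $[0,\infty)$ is simply connected (indeed contractible) and locally path-connected, any continuous $x\colon[0,\infty)\to\T^d$ with $x(0)=0$ lifts uniquely to a continuous $y\colon[0,\infty)\to\R^d$ with $y(0)=0$ and $\pi\circ y=x$. Uniqueness follows from connectedness of $[0,\infty)$ together with the fact that two lifts differ by a locally constant $\Z^d$-valued function. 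This already shows $\ell$ is a well-defined bijection onto its image, with inverse given by post-composition with $\pi$, i.e. $\ell^{-1}[y]=\pi\circ y=(y\bmod 1)$.

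For (ii), linearity is immediate: if $y_1=\ell[x_1]$ and $y_2=\ell[x_2]$, then $\pi\circ(ay_1+by_2)=a\pi\circ y_1+b\pi\circ y_2=ax_1+bx_2$ for $a,b\in\R$ (using that $\pi$ is a group homomorphism $\R^d\to\T^d$), and $ay_1+by_2$ vanishes at $0$, so by uniqueness it equals $\ell[ax_1+bx_2]$. For (iii), continuity of $\ell^{-1}=(\,\cdot\,\bmod 1)$ is the easy direction: the projection $\pi$ is $1$-Lipschitz for the chosen metrics (since $|(y)_1|\le|y|$ coordinatewise, hence in $L^2$), so $\|\pi\circ y-\pi\circ y'\|_\infty\le\|y-y'\|_\infty$, giving uniform continuity. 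The substantive direction is continuity of $\ell$ itself. Here I would argue that $\ell$ is continuous at $0\in\Ccal^0(\T^d)$ and then use linearity to conclude continuity everywhere. Concretely, fix $T_0<\infty$ and $\eps\in(0,1/2)$; if $\|x\|_\infty<\eps$ then on $[0,T_0]$ the lift $y=\ell[x]$ can be tracked explicitly: since $x$ stays in the ball of radius $\eps<1/2$ around $0$ in $\T^d$, the unique lift starting at $0$ also stays within $\eps$ of $0$ in $\R^d$ (the lift cannot jump to another fundamental domain without $x$ leaving that ball, by continuity and a connectedness/intermediate-value argument on each coordinate). Doing this on each interval and recalling that $\|\cdot\|_\infty$ here is a sup over all $t\ge 0$, one gets $\|\ell[x]\|_\infty\le\|x\|_\infty$ whenever $\|x\|_\infty<1/2$, which is more than enough for continuity at $0$.

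The main obstacle, and the place that needs care rather than cleverness, is that the norm on $\Ccal^0(\T^d)$ is a sup over the \emph{whole} half-line $[0,\infty)$, so the naive "compact domain" lifting estimate must be upgraded to a global one; the point is that the bound $\|\ell[x]\|_\infty\le\|x\|_\infty$ obtained on each $[0,T_0]$ has a constant independent of $T_0$, so it passes to the limit $T_0\to\infty$. A secondary subtlety is that "homeomorphism onto its range" requires identifying the range: I would note that every $y\in\Ccal^0_0([0,\infty),\R^d)$ arises as $\ell[\pi\circ y]$, so the range is all of $\Ccal^0(\R^d)$, and then the two continuity bounds above show $\ell$ is a bi-Lipschitz-type homeomorphism (Lipschitz one way, and the estimate $\|\ell[x]\|_\infty\le\|x\|_\infty$ near $0$ plus linearity the other way). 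I would close by remarking that this homeomorphism, being linear, also maps Borel sets to Borel sets, which is what makes it useful for transporting the convergence-in-law statements between $\Ccal^0(\T^d)$ and $\Ccal^0(\R^d)$ in the sequel.
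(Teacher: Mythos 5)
Your proposal is correct and follows essentially the same route as the paper: uniqueness via a continuous $\Z^d$-valued difference vanishing at $0$, identification of the range through the canonical projection $\pi$, and continuity of $\ell$ obtained from continuity at $0$ (a lift of a path confined to a small ball around $0$ in $\T^d$ stays in one fundamental domain) combined with linearity. The only cosmetic difference is that you invoke standard covering-space lifting for existence, whereas the paper constructs the lift explicitly via a sequence of stopping times at which $x$ has moved by $1/8$; both are fine.
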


\begin{proof}
The proof consists in formalizing the obvious.
By construction we shall show that the lift of a torus process is well defined and is unique. We recall that $(x)_1$ is the unique element in the equivalence class of $x$ that is inside $[-\frac12, \frac12)^d$. Let $x \in \Ccal^{0}(\T^d)$, we construct $y \in \Ccal^{0}(\R^d)$ as follows. Fix $y(0) = x(0) = 0$ and $t_0 = 0$. Consider the sequence $(t_n)$ defined by
\begin{equation}
\label{e:seq_lift}
	t_{n+1} = \inf\{t>t_n,\; |(x(t)-x(t_{n}))_1| = 1/8\}
\end{equation}
and $t_{n+1}= \infty$ if the above set is empty. One sees from the continuity of $x$ that $t_{n+1}>t_n$ and $t_n \to \infty$ (or $= \infty$ for some $n$). Then we set
\begin{equation}
\label{e:seq_lift_2}
	y(t) = y(t_n) + (x(t)-x(t_n))_1 \qquad (\text{for }t\in (t_{n},t_{n+1}] \text{ and }n \geq 0)
\end{equation}
and define $\ell[x] = y$ which is in $\Ccal^0(\R^d)$ by construction. The fact $x(t) = y(t) \mod 1$ is clearly satisfied and can be checked on $(t_n,t_{n+1}]$ by induction on $n$. To show the uniqueness suppose that $y_1,y_2 \in \Ccal^0(\R^d)$ both satisfy $x(t) = y_i(t) \mod 1$ for all $t \geq 0$ and $i=1,2$. Then $y_1(t) - y_2(t) = 0 \mod 1$ so that $y_1(t) - y_2(t) = u(t)$ with $u(t) \in \Z^d$ and by continuity of both $y_1$ and $y_2$ and the fact that $y_1(0) = y_2(0) =0 $ this implies that $u=0$. 
The fact that the range of $\ell$ is $\Ccal^0(\R^d)$ follows from the fact that for any $y \in \Ccal^0(\R^d)$, the canonical projection on the torus $y \mod 1$ is an element of $\Ccal^0(\T^d)$ which satisfies $\ell[ y \mod 1] = y$. This also settles the fact that $\ell^{-1}$ is just the canonical projection onto the torus. The continuity of $\ell^{-1}$ is clear from the fact that it is a projection. The linearity of $\ell$ then follows from the bijectivity of $\ell$ and the linearity of the canonical projection. 
Note that for any $\eps>0$ with $\eps <1/8$ we have that $\|x\|_{\infty}<\eps$ implies that $ \ell[x](t) = (x(t))_1 \text{ for all }t \geq 0 $
and thus that $\|\ell[x]\|_\infty < \eps $ which proves the  continuity of $\ell$. 
\end{proof}

We let $B$ be the usual Brownian motion on $\R^d$ and define $B^\T$, the \emph{Brownian motion on the torus}, by

\begin{equation}
\label{e:torus_BM}
	B^\T = \ell^{-1}(B) = B\hspace{-0.2cm}\mod 1.
\end{equation}

\subsection{Reduction of proof}
\label{sec:scaling_limit}
In this section we reduce the proof of Theorem \ref{thm:main} to a convergence in law on $\Z^d$ by lifting the torus walks and their corresponding rescaled processes.
We focus on the case $T = \infty$ which means that $n/r^2 \uparrow \infty$ because it is the most interesting case. The proof for an arbitrary finite and positive $T$ is completely analogous and requires no additional effort and the case of $T=0$ is settled in Remark~\ref{rmk:degener_lim}. From now on $B$ is a $d$-dimensional Brownian motion on some probability space $(\Omega, \Acal, \P)$ satisfying 
\begin{equation}
	\E |B_1|^2 = D
\end{equation} 
where we recall that $D$ is the diffusion constant defined in \eqref{e:mean_square_disp}. We define $B^\T$ as in \eqref{e:torus_BM}. 
We want to show that whenever $\omega \sim \beta$-WSAW$^\T_{n,r}$ and $X^{(n)} = x^{(n)}(\omega)$ then for $r^2 \ll n \ll V^{1/2}$
\begin{align}
	(X^{(n)}_t)_{t \geq 0} \Rightarrow (B^\T_t)_{t \geq 0}.
\end{align}
We will do so by studying instead the lift of torus walks which is equivalent from the following lemma. 
\begin{lemma}[Studying the lift of a process is enough]
\label{lem:lift_equiv}
	Let $P,P_1,P_2, \cdots $ be probability measures on $\Ccal^{0}(\T^d)$. Then the following equivalence holds
	\begin{equation}
	 P_n \Rightarrow P 
	\quad\text{ if and only if }\quad
	P_n \circ \ell^{-1} \Rightarrow P \circ \ell^{-1}.
	\end{equation}
\end{lemma}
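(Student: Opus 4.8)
The plan is to deduce the equivalence directly from the fact, established in Proposition~\ref{prop:lift_hom}, that $\ell\colon \Ccal^0(\T^d)\to \Ccal^0(\R^d)$ is a homeomorphism onto its range $\Ccal^0(\R^d)$ (with inverse the canonical projection $y\mapsto y\bmod 1$). In particular $\ell$ is a Borel-measurable bijection onto $\Ccal^0(\R^d)$ with measurable inverse, so pushing measures forward along $\ell$ is a bijection between Borel probability measures on $\Ccal^0(\T^d)$ and Borel probability measures on $\Ccal^0(\R^d)$. The statement to prove is then an instance of the general principle that weak convergence is preserved and reflected by a homeomorphism, applied to the continuous map $\ell$ and its continuous inverse $\ell^{-1}$.

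Concretely, I would argue as follows. For the forward implication, suppose $P_n \Rightarrow P$ on $\Ccal^0(\T^d)$. Since $\ell$ is continuous, the continuous mapping theorem (see \cite[Theorem~2.7]{Bill95}) immediately gives $P_n\circ \ell^{-1} \Rightarrow P\circ \ell^{-1}$ on $\Ccal^0(\R^d)$. (Here I am using that $P_n\circ\ell^{-1}$ denotes the pushforward of $P_n$ under $\ell$, i.e. the law of $\ell(X)$ when $X$ has law $P_n$; the notation $\ell^{-1}$ refers to preimages of Borel sets, which is consistent since $\ell$ is a bijection.) For the converse, suppose $P_n\circ \ell^{-1} \Rightarrow P\circ\ell^{-1}$. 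Because $\ell^{-1}$ is also continuous (it is the canonical projection, whose continuity was checked in Proposition~\ref{prop:lift_hom}), a second application of the continuous mapping theorem to the maps $Q_n := P_n\circ\ell^{-1}$ and $Q:=P\circ\ell^{-1}$ gives $Q_n\circ\ell \Rightarrow Q\circ\ell$, i.e. $P_n\circ\ell^{-1}\circ\ell \Rightarrow P\circ\ell^{-1}\circ\ell$. Since $\ell$ is a bijection, $\ell^{-1}\circ\ell$ is the identity on $\Ccal^0(\T^d)$, so this reads $P_n\Rightarrow P$, completing the equivalence.

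There is essentially no obstacle here: the only point requiring a moment's care is the bookkeeping around the notation $P_n\circ\ell^{-1}$ and the fact that the continuous mapping theorem as usually stated concerns the law of $g(X)$ for a continuous $g$ and converging laws of $X$, which is exactly the pushforward statement needed. One should also note that $Q_n = P_n\circ\ell^{-1}$ is genuinely supported on $\Ccal^0(\R^d)$ (the full space), so there is no issue with weak convergence being considered on a subspace versus the ambient space; this is guaranteed by Proposition~\ref{prop:lift_hom}, which identifies the range of $\ell$ with all of $\Ccal^0(\R^d)$. Thus the lemma follows, and it justifies replacing the torus convergence $X^{(n)}\Rightarrow B^\T$ by the lifted convergence on $\Ccal^0(\R^d)$ in the subsequent reduction.
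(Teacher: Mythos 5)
Your proposal is correct and follows exactly the paper's argument: both directions are obtained by applying the continuous mapping theorem (\cite[Theorem 2.7]{Bill95}) to $\ell$ and to its continuous inverse, using that Proposition~\ref{prop:lift_hom} makes $\ell$ a homeomorphism onto $\Ccal^0(\R^d)$. The paper states this in one sentence; you have merely spelled out the same reasoning in more detail.
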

\begin{proof}
	This is an easy consequence of the \emph{mapping theorem} (see \cite[Theorem 2.7]{Bill95}) for convergence in law together with the fact that $\ell$ is a homeomorphism from $\Ccal^{0}(\T^d)$ to $\Ccal^0(\R^d)$ by Proposition \ref{prop:lift_hom}.
\end{proof}

We define the corresponding $\Z^d$ $\beta$-WSAW renormalized in a similar way as $X$ except that it is thus an $\R^d$ process. In detail, for $\omega$ a walk in $\Wcal_n$ we define the mapping $y^{(n)}$ from $\Wcal_{n}$ to $\Ccal^0(\R^d)$ by
\begin{align}
\label{e:y_def}
	y^{(n)}_{k/r^2}(\omega)
	&= \frac{\omega(k)}{r} \text{ for } k \in \{0, \cdots, n\} 
\end{align} 
and $y^{(n)}_t$ interpolates linearly between these values and also
\begin{equation}
	y^{(n)}_{t} =  y^{(n)}_{n/r^2} \text{ for }t \geq n/r^2.
\end{equation}
From \eqref{e:x_def} and \eqref{e:y_def} we see that $y^{(n)}$ is nothing else than
\begin{equation}
\label{e:lift_comp}
	y^{(n)} = \ell \circ x^{(n)} \circ \hat \ell^{-1}.
\end{equation}

\begin{lemma}
\label{lem:implication_conv}
Let $\beta>0$ be small enough, $n = o(V^{1/2})$, $\omega \sim \beta $-{\rm WSAW}$_n$, $\omega^\T \sim \beta$-{\rm WSAW}$^\T_{n,r}$ with $X^{(n)} = x^{(n)}(\omega^\T)$, $Y^{(n)} = y^{(n)}(\omega)$ and suppose that 
\begin{equation}
\label{e:full_zd_conv}
	(Y^{(n)}_t)_{t \geq 0} \Rightarrow (B_t)_{t \geq 0}
\end{equation}
holds. Then we have 
\begin{equation}
	(X^{(n)}_t)_{t \geq 0} \Rightarrow (B^\T_t)_{t \geq 0}.
\end{equation}
\end{lemma}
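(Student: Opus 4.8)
The plan is to transfer the statement to $\Z^d$ using the two tools already in place: the homeomorphism $\ell$ of Proposition~\ref{prop:lift_hom} and the walk-bijection $\hat\ell$. By Lemma~\ref{lem:lift_equiv} together with the identity $B^\T=\ell^{-1}(B)$ from \eqref{e:torus_BM}, it is enough to show $\ell(X^{(n)})\Rightarrow B$ on $\Ccal^{0}(\R^d)$. So I would first identify the law of $\ell(X^{(n)})$. Setting $\tilde\omega=\hat\ell[\omega^\T]$, so that $\omega^\T=\hat\ell^{-1}[\tilde\omega]$, the composition formula \eqref{e:lift_comp} gives $\ell(X^{(n)})=\ell\bigl(x^{(n)}(\omega^\T)\bigr)=(\ell\circ x^{(n)}\circ\hat\ell^{-1})(\tilde\omega)=y^{(n)}(\tilde\omega)$ as random variables; and since $\hat\ell$ is a bijection of $\Wcal_n^\T$ onto $\Wcal_n$ under which a torus self-intersection of $\omega^\T$ corresponds exactly to a mod-$r$ self-intersection of $\tilde\omega$ (this is immediate from \eqref{e:liftdef}, cf. \eqref{e:cn_T_other_def}), the walk $\tilde\omega$ has law $\tilde\P(\omega)=K^\T[0,n](\omega)/c_n^\T$ on $\Wcal_n$. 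Thus $\ell(X^{(n)})$ is distributed as $y^{(n)}$ evaluated at a $\tilde\P$-walk, whereas $Y^{(n)}=y^{(n)}(\omega)$ with $\omega$ distributed as $\P_{\beta,n}(\omega)=K[0,n](\omega)/c_n$.

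The next step is to compare $\tilde\P$ with $\P_{\beta,n}$ in total variation. The crucial point is the pointwise monotonicity $K^\T[0,n]\le K[0,n]$ on $\Wcal_n$ — every genuine intersection $\omega(s)=\omega(t)$ is in particular an intersection modulo $r$, so the torus product carries at least as many factors $(1-\beta)$ — which also gives $c_n^\T\le c_n$ upon summing. Adding and subtracting $K[0,n](\omega)/c_n^\T$ and using these two inequalities then yields
\begin{equation*}
	\sum_{\omega\in\Wcal_n}\left|\frac{K^\T[0,n](\omega)}{c_n^\T}-\frac{K[0,n](\omega)}{c_n}\right|
	\;\le\;\frac{c_n-c_n^\T}{c_n^\T}+c_n\cdot\frac{c_n-c_n^\T}{c_n^\T\,c_n}
	\;=\;2\left(\frac{c_n}{c_n^\T}-1\right).
\end{equation*}
In the regime at hand $n\to\infty$ with $n=o(V^{1/2})$, so Theorem~\ref{thm:dilute} gives $c_n/c_n^\T\to1$ and hence the right-hand side tends to $0$.

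It remains to combine the two steps. For any bounded continuous functional $F$ on $\Ccal^{0}(\R^d)$,
\begin{equation*}
	\bigl|\E\,F(\ell(X^{(n)}))-\E\,F(Y^{(n)})\bigr|
	\;\le\;\|F\|_\infty\sum_{\omega\in\Wcal_n}\left|\frac{K^\T[0,n](\omega)}{c_n^\T}-\frac{K[0,n](\omega)}{c_n}\right|\;\longrightarrow\;0,
\end{equation*}
while $\E\,F(Y^{(n)})\to\E\,F(B)$ by the hypothesis \eqref{e:full_zd_conv}; therefore $\E\,F(\ell(X^{(n)}))\to\E\,F(B)$, i.e. $\ell(X^{(n)})\Rightarrow B$, and Lemma~\ref{lem:lift_equiv} then gives $X^{(n)}\Rightarrow B^\T$. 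I do not expect a genuine obstacle here: the content is entirely in the first step — recognizing that composing the process-lift $\ell$ with the walk-unwrapping $\hat\ell$ turns the torus-rescaled WSAW into the $\Z^d$-rescaled WSAW weighted by $K^\T$ instead of $K$ — together with the observation that the pointwise bound $K^\T\le K$ is exactly what promotes the partition-function asymptotics $c_n\sim c_n^\T$ of Theorem~\ref{thm:dilute} into total-variation closeness of the path laws; everything else is routine bookkeeping.
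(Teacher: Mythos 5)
Your proposal is correct and follows essentially the same route as the paper: reduce via Lemma~\ref{lem:lift_equiv} to showing $\ell(X^{(n)})\Rightarrow B$, identify the law of $\ell(X^{(n)})$ as $y^{(n)}$ applied to a $\Z^d$-walk weighted by $K^\T[0,n]/c_n^\T$ through \eqref{e:lift_comp}, and then exploit $K^\T[0,n]\le K[0,n]$ together with $c_n\sim c_n^\T$ from Theorem~\ref{thm:dilute}. Your packaging of the comparison as a total-variation bound $2(c_n/c_n^\T-1)$ is just a slightly tidier form of the paper's triangle-inequality decomposition into $\Ecal_n(f)$ and the $|1-c_n/c_n^\T|$ term, so the content is identical.
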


\begin{proof}
	We let $\beta>0$ be small enough and $X$, $Y$ as in the statement of Lemma \ref{lem:implication_conv}. By assumption for any continuous bounded functional $f$ on $\Ccal^{0}(\R^d)$ we have
\begin{equation}
	\E_{\beta,n}\,f(Y^{(n)}) \to \E f(B) \text{ as } n \to \infty.
\end{equation}
We see from Lemma \ref{lem:lift_equiv} that 
	\begin{equation}
	(X^{(n)}_t)_{t \geq 0} \Rightarrow (B_t^\T)_{t \geq 0} \quad\text{ if and only if }\quad(\ell[X^{(n)}_\cdot](t))_{t \geq 0} \Rightarrow (B_t)_{t \geq 0}.
	\end{equation}
So it is enough to prove that the convergence in law holds for $Z^{(n)}=\ell[X^{(n)}]$ with limiting law that of a usual $\R^d$-Brownian motion with diffusion constant $D$. 
We use \eqref{e:K_def} and \eqref{e:cn_T_other_def} to see that for any continuous and bounded functional $f$ on $\Ccal^{0}(\R^d)$
\begin{align}
	\E^\T_{\beta,n,r}f(Z^{(n)}) 
	&= \frac{1}{c_n^\T}\sum_{\omega^\T \in \Wcal_n^{\T}}f(\ell \circ x^{(n)}(\omega^\T))K[0,n] \nnb
	&= \frac{1}{c_n^\T}\sum_{\omega \in \Wcal_n}f(\ell \circ x^{(n)}\circ \hat \ell^{-1}(\omega)) K^{\T}[0,n] \nnb
	&= \frac{1}{c_n^\T}\sum_{\omega \in \Wcal_n}f(y^{(n)}(\omega)) K^{\T}[0,n]
\end{align}
where the last line follows from \eqref{e:lift_comp}. By definition,
\begin{align}
	\E_{\beta,n}f(Y^{(n)}) 
	&= \frac{1}{c_n}\sum_{\omega \in \Wcal_n}f(y^{(n)}(\omega))K[0,n].
\end{align}
Defining 
$\Ecal_n(f) = |\E^\T_{\beta,n,r}f(Z^{(n)}) -  \frac{c_n}{c_n^\T}\E_{\beta,n}f(Y^{(n)})|$ we have
\begin{align}
\label{e:ZminusB_bd}
	|\E^\T_{\beta,n,r}f(Z^{(n)})- \E f(B)| 
	\leq 
	\Ecal_n(f) 
	+ \left|1-\frac{c_n}{c_n^\T}\right| \, \E_{\beta,n}|f (Y^{(n)})| 
	+ |\E_{\beta,n} f(Y^{(n)})-\E f(B)| .
\end{align}
The third term in the right hand-side of \eqref{e:ZminusB_bd} goes to zero by assumption. The second term goes to zero as $n$ (and thus $r$) goes to infinity using the boundedness of $f$ and the fact that $c_n^\T \sim c_n$ from Theorem \ref{thm:dilute} when $n = o(V^{1/2})$. To control $\Ecal_n(f)$ we simply note that since $K^{\T}[0,n] \leq K[0,n]$ we have
\begin{align}
	\Ecal_n(f) 
	&\leq \frac{1}{c^\T_{n}} \sum_{\omega \in \Wcal_n}|f(y^{(n)}(\omega))|(K[0,n]-K^{\T}[0,n]) \nnb
	&\leq \|f\|_{\infty}\frac{c_n - c_n^\T}{c_n^{\T}} \to 0 \text{ as } r \to \infty
\end{align}
and the last line follows from \eqref{e:cnasy} and Theorem \ref{thm:dilute} since $\beta$ is sufficiently small and $n\to \infty$ with $n=o(V^{1/2})$.
\end{proof}
\begin{rk}
\label{rmk:degener_lim}
Following a similar proof as above, it is easy to check that if $\omega^\T \sim \beta$-WSAW$_{n,r}^\T$ with $n\ll r^2$ then any rescaling of $\omega^\T$ and $\T_r^d$ converges to $0$ in probability in the sense that for every $\eps>0$
\begin{equation}
	\lim_{\substack{n,r \to \infty\\ n \ll r^2}} \P_{\beta,n,r}^\T(r^{-1} \sup_{1 \leq k \leq n}|\omega^\T(k)|  > \eps) = 0.
\end{equation}
Indeed we have that for any $\eps >0$ and similarly as in the previous proof (although not making explicit the sums over walks) that
\begin{align}
	\P_{\beta,n,r}^\T(r^{-1} \sup_{1 \leq k \leq n}|\omega^\T(k)|  > \eps) 
	& \leq \P_{\beta,n,r}^\T(r^{-1} \sup_{1 \leq k \leq n}|\ell[\omega^\T](k)|  > \eps) \nnb
	&\leq \P_{\beta,n}(r^{-1} \sup_{1 \leq k \leq n}|\omega(k)|  > \eps) \nnb
	&= \P_{\beta,n}(n^{1/2}r^{-1} \sup_{1 \leq k \leq n}n^{-1/2}|\omega(k)|  > \eps)
\end{align}
and the proof follows from the hypothesis that $n \ll r^2$ and from tightness of $(\sup_{1 \leq k \leq n}n^{-1/2}|\omega(k)|)_n$ which is a consequence of the convergence in law of the $\Z^d$-WSAW to Brownian motion on $\Z^d$ when $d>4$.
\end{rk}
\subsection{Proof of Theorem \ref{thm:main}}
In order to conclude the proof of the main theorem it is enough by Lemma~\ref{lem:implication_conv} to show that \eqref{e:full_zd_conv} holds. We thus need to show that our rescaled (see \eqref{e:y_def}) $\Z^d$-WSAW  converges to an infinite Brownian path. This latter fact is not known. Indeed, the original $\Z^d$-scaling limit result presented in \cite[Theorem 6.1.8]{MS93} is different. It is shown there that a length $n$ $\beta$-WSAW $\omega$ rescaled by $\sqrt{n}$ converges in distribution to $(B_t)_{0 \leq t \leq 1}$. This last subsection thus gives a proof that 
\begin{equation}
\label{e:full_zd_conv_bis}
	(Y^{(n)}_t)_{t \geq 0} \Rightarrow (B_t)_{t \geq 0}.
\end{equation}
To reduce the proof further we use \cite[Proposition 16.6]{Kal21_2nd} which states that the convergence in law of continuous random processes indexed by $\R^+$ and taking values in some general metric space holds if and only if the convergence holds for the restrictions to any compact subset of $\R^+$. In fact $\R^+$ could be replaced by much more general spaces but we do not need the full generality of the proposition here. In our case, and with some further simplifications, this means that it is enough to prove the following proposition.
\begin{prop}
\label{prop:Y_to_B}
	Let $\beta$ be small enough. For $\omega \sim \beta$-{\rm WSAW}$_n$ on $\Z^d$ and $Y^{(n)} = y^{(n)}(\omega)$ we have that for all $T\in (0, \infty)$ 
	\begin{equation}
		(Y^{(n)})_{0 \leq t \leq T} \Rightarrow (B_{t})_{ 0 \leq t \leq T}.
	\end{equation}
where the convergence in law holds in the space $\Ccal_0^{0}([0,T],\R^d)$ instead of $\Ccal_0^{0}([0,\infty),\R^d)$.
\end{prop}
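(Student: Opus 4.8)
The plan is to prove Proposition~\ref{prop:Y_to_B} by the classical two-step route: establish (i) tightness of the laws of $Y^{(n)}$ in $\Ccal^0_0([0,T],\R^d)$ and (ii) convergence of the finite-dimensional distributions of $Y^{(n)}$ to those of $B$ on $[0,T]$; together these yield the asserted weak convergence (see e.g. \cite[Chapter 1]{Bill95}). Throughout we are in the relevant regime $r\to\infty$ with $n=n_r$ and $n/r^2\to\infty$, so that $Y^{(n)}$ is genuinely defined on all of $[0,T]$ and the walk is evaluated only up to time $\lceil Tr^2\rceil\ll n$; the complementary case $n\ll r^2$ is the degenerate one of Remark~\ref{rmk:degener_lim}.

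For tightness the input is the standard lace-expansion moment bound for the $\Z^d$-WSAW: for $\beta$ small and $d>4$,
\begin{equation*}
	\E_{\beta,n}\big|\omega(b)-\omega(a)\big|^{4}\;\le\; C\,(b-a)^{2}
	\qquad (0\le a\le b\le n),
\end{equation*}
uniformly in $n$ --- a consequence of the random-walk-type bounds on the WSAW two- and four-point functions in \cite{MS93,Slad06}. Dividing positions by $r$ and times by $r^{2}$ turns this into $\E_{\beta,n}|Y^{(n)}_{t}-Y^{(n)}_{s}|^{4}\le C\,|t-s|^{2}$ for $s,t\in r^{-2}\{0,1,\dots,n\}$; the linear interpolation only improves this, and since eventually $n/r^{2}>T$ the frozen tail of $Y^{(n)}$ beyond $n/r^{2}$ is irrelevant on $[0,T]$. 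The Kolmogorov--Chentsov criterion then gives tightness in $\Ccal^0_0([0,T],\R^d)$.

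For the finite-dimensional distributions, fix $0=t_{0}<t_{1}<\cdots<t_{k}\le T$ and put $m_{i}=\lfloor r^{2}t_{i}\rfloor$; since $Y^{(n)}_{t_i}=\omega(m_i)/r+O(1/r)$ it suffices to prove that the rescaled increments $r^{-1}\big(\omega(m_{i})-\omega(m_{i-1})\big)$, $1\le i\le k$, converge jointly in law to independent Gaussians, the $i$-th being $N\!\big(0,D(t_{i}-t_{i-1})I\big)$. Here one transplants the central limit theorem and intermediate-point analysis of \cite[Section 6.1]{MS93} --- which, as noted above Lemma~\ref{lem:fdd}, is elementary to carry out in the easier WSAW setting --- from the macroscopic scale $\asymp n$ used there to the mesoscopic scale $m_{i}-m_{i-1}\sim(t_{i}-t_{i-1})r^{2}$. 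The key point is that, because $m_{k}\sim Tr^{2}\ll n$, the law of the first $m_{k}$ steps of a length-$n$ WSAW is close to that of a length-$m_{k}$ WSAW: the coupling between the initial segment and its complement is carried by their mutual self-intersections, whose expected number is $O(1)$ uniformly when $d>4$ (it is controlled by $\sum_{\ell\ge1}\min(\ell,m_k)\,\ell^{-d/2}$ via the return estimate $\P_{\beta,n}(\omega(s)=\omega(t))\prec (t-s)^{-d/2}$, and $\sum_{\ell\ge1}\ell^{1-d/2}<\infty$ exactly for $d>4$). The same summability forces non-overlapping increments to decorrelate, and combined with \eqref{e:mean_square_disp} and the lace-expansion Gaussian limit for the WSAW endpoint this yields (ii).

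I expect the finite-dimensional step to be the only real obstacle: one must control, uniformly in the regime $n/r^{2}\to\infty$, that the initial $m_{k}$-step segment of the length-$n$ WSAW behaves in the diffusive scaling like an honest length-$m_{k}$ WSAW, and in particular that the extra weight generated by self-intersections straddling the cut times $m_{i}$ both converges and factors out of the limit because it is asymptotically independent of the macroscopic shape functional $f(Y^{(n)})$. This is precisely where the lace-expansion bookkeeping of \cite{MS93}, now applied at the mesoscopic rather than the macroscopic scale, does the work; tightness, by contrast, is routine given the moment bound above.
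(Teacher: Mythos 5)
Your overall architecture (tightness plus finite-dimensional distributions, with the walk cut at the mesoscopic time $m_k=\lfloor Tr^2\rfloor\ll n$ and the known $\Z^d$ central limit theorem applied to the initial segment) is the same as the paper's. Your tightness step is sound in outline but runs through a fourth-moment bound $\E_{\beta,n}|\omega(b)-\omega(a)|^4\le C(b-a)^2$ that you assert without derivation or reference; the paper instead gets by with the second moment only, deducing $\E_{\beta,n}|Y^{(n)}_t-Y^{(n)}_s|^2\le A|t-s|$ from \eqref{e:mean_square_disp} via the submultiplicativity $K[0,n]\le K[0,a]K[a,b]K[b,n]$ together with $c_n^{-1}\prec c_a^{-1}c_{b-a}^{-1}c_{n-b}^{-1}$, and then invokes \cite[Corollary 16.9]{Kal21_2nd}. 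If you insist on Kolmogorov--Chentsov you must actually supply the fourth-moment estimate (the same submultiplicativity reduces it to a fourth moment of the endpoint, which is not among the inputs quoted in the paper).

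The genuine gap is in the finite-dimensional distributions. You correctly identify the crux --- the initial $m_k$-step segment of a length-$n$ WSAW must be shown to behave, after diffusive rescaling, like a free length-$m_k$ WSAW --- but your justification does not prove it. The expected number of self-intersections straddling the cut is indeed $O(1)$, not $o(1)$, and an $O(1)$ number of $(1-\beta)$-reweightings changes the marginal law of the initial segment by a constant-order amount (this is exactly why $c_m c_{n-m}/c_n\to A\neq 1$). What saves the limit is not the smallness of the cross-interaction but its localization near the cut time, and that localization must be extracted quantitatively. The paper does this with the lace-expansion identity \eqref{e:KJK}, $K[0,n]=\sum_{I\ni m}K[0,I_1]J[I_1,I_2]K[I_2,n]$: the absolute summability \eqref{e:pi_bound} of $J$ shows that the sum is dominated by intervals with $|I|\le b_n=o(k_n^{1/2})$, so the interaction across the cut is confined to a time window invisible at scale $\sqrt{k_n}$, after which \cite[Theorem 6.6.2]{MS93} applies to the piece $\omega_1$ of length $I_1$ and the tail contributes only the factor $c_{n-I_2}$. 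Your phrase ``factors out of the limit because it is asymptotically independent of the macroscopic shape functional'' names the conclusion of this argument rather than proving it; as written, the decoupling step --- which you yourself flag as the only real obstacle --- is the part of the proposal that is missing.
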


We thus need to prove that for all $T \in (0, \infty)$, the subwalk starting from $0$ of length $r^2 T$ of a length-$n$ $\Z^d$-WSAW converges to Brownian motion once rescaled by $r$. 
As usual, the proof has two steps, we need to show that the finite-dimensional distributions (fdd) of $Y^{(n)}$ converge to those of a Brownian motion with diffusion constant $D$ and then prove tightness of the sequence of processes. The proof requires only a small generalization of \cite[Chapter 6]{MS93} that we explain in the next two subsections. Before doing so we introduce the following notation which we use in both parts of the proof. We note that for $\omega \in \Wcal_{n}$
\begin{align}
\label{e:interp}
	Y^{(n)}_t(\omega) &= Y^{(n)}_{t_r}(\omega)+ \varphi_{r,n,t}(\omega),\\
	\varphi_{r,n,t}(\omega) &= (r^2t-\lfloor r^2t \rfloor) (Y^{(n)}_{t_r+r^{-2}}(\omega) - Y^{(n)}_{t_r}(\omega))
\end{align} with $t_r = \lfloor r^2t\rfloor/r^2$ and where $\varphi_{r,n,t}(\omega)$ satisfies
\begin{equation}
\label{e:varphi_neglig}
	|\varphi_{r,n,t}(\omega)| \leq \frac{1}{r} \quad \text{ uniformly in }\omega,t,T.
\end{equation}
\subsubsection{Finite-dimensional distributions}
Note that we can equivalently determine the fdd of $Y_{t}$ by replacing in the computations $Y_{t}$ by $Y_{t_r}$ by \eqref{e:interp}, \eqref{e:varphi_neglig} and the discussion along \cite[p.88--89]{Bill95}. 
Since the convergence of the fdd is implied by convergence of the characteristic function, the statement of convergence of the fdd takes the form of Lemma \ref{lem:fdd}. Also in the next lemma we let $k_n$ be any sequence going to infinity and satisfying $Tk_n\leq n$ for $n$ large enough. Here $k_n$ plays the role of $r^2$ and is typically $\ll n$ for our purposes but the following lemma does not require this condition.
\begin{lemma}[Finite-dimensional distributions of $Y_t^{(n)}(\omega)$]
\label{lem:fdd}
Let $d>4$, $\beta>0$ small enough and $T>0$.  For any increasing sequence $k = k_n$ such that $T k_n\leq n$ for all $n$ large enough and $k_n \to \infty$, and for any integer $N \geq 1$,  $0= t_0 < t_1< \cdots <t_N \leq T$ and $u_1,\cdots,u_N \in \R^d$, the following holds
\begin{equation}
\label{e:fdd_goal}
	\lim_{n \to \infty}\E_{\beta,n}\exp\Big[i\sum_{j=1}^{N}\frac{u_j}{\sqrt{D k_n}}\cdot\Big(\omega(\lfloor t_j k_n \rfloor)-\omega(\lfloor t_{j-1} k_n \rfloor)\Big)\Big] = \exp\Big(-\frac{1}{2d}\sum_{j=1}^N |u_j|^2(t_j-t_{j-1})\Big).
\end{equation}

\end{lemma}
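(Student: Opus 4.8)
The plan is to adapt the finite-dimensional-distribution part of the classical weak-convergence proof for the $\Z^d$-WSAW in \cite[Chapter 6]{MS93}. There the characteristic function is evaluated on a length-$n$ walk rescaled by $\sqrt n$; the only new features here are that it is evaluated on the initial segment of length $m_N = \lfloor t_N k_n\rfloor \le \lfloor T k_n\rfloor \le n$ of a length-$n$ walk, and that $k_n$ is decoupled from $n$. Fix $N\ge 1$, times $0 = t_0 < \cdots < t_N \le T$, vectors $u_1,\dots,u_N$, write $m_j = \lfloor t_j k_n\rfloor$, $\Delta m_j = m_j - m_{j-1}$ (so $\Delta m_j \to \infty$ and $\Delta m_j/k_n \to t_j - t_{j-1}$), and abbreviate $E_j = \exp\!\big(iu_j\cdot(\omega(m_j)-\omega(m_{j-1}))/\sqrt{D k_n}\big)$. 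First I would decompose each $\omega \in \Wcal_n$ into the $N+1$ consecutive sub-walks determined by the cut times $0 = m_0 < m_1 < \cdots < m_N < n$ --- the last one being the \emph{free} segment $[m_N,n]$, which carries no phase --- and factor $K[0,n]$, exactly as in \cite[Chapter 6]{MS93}, as the product of the $N+1$ internal interactions $K[m_{j-1},m_j]$ (with $m_{N+1}:=n$) times the cross-interaction factors $C_{j\ell}\le 1$ over pairs of distinct sub-walks.

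Ignoring the cross-interactions for a moment, the internal sum over sub-walk $j$ ($1\le j\le N$) carries the phase $E_j$ and so, by translation invariance, equals $\hat c_{\Delta m_j}(u_j/\sqrt{D k_n})$ where $\hat c_\ell(q) = \sum_{\eta\in\Wcal_\ell}e^{iq\cdot\eta(\ell)}K[0,\ell]$, while the free segment contributes $c_{n-m_N}$. Since $\hat c_\ell(q)/c_\ell = \E_{\beta,\ell}[e^{iq\cdot\omega(\ell)}]$, the $\Z^d$-WSAW endpoint CLT supplied by the lace expansion (\cite[Chapter 6]{MS93}, \cite{HHS98}) gives: whenever $\ell\to\infty$ and $q\to 0$ with $\ell|q|^2\to a$, one has $\hat c_\ell(q)/c_\ell \to \exp(-\tfrac{D}{2d}a)$. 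As $\Delta m_j\,|u_j|^2/(D k_n) \to |u_j|^2(t_j - t_{j-1})/D$, each factor $\hat c_{\Delta m_j}(u_j/\sqrt{D k_n})/c_{\Delta m_j}$ converges to $\exp(-\tfrac{1}{2d}|u_j|^2(t_j - t_{j-1}))$, whose product over $j$ is the right-hand side of \eqref{e:fdd_goal}.

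It remains to see that the cross-interactions and the normalization do not change this limit, which is again \cite[Chapter 6]{MS93}. Let $\tilde\E$ denote the law under which the $N+1$ sub-walks are independent WSAWs of lengths $\Delta m_1,\dots,\Delta m_N, n-m_N$, and put $Z = c_{n-m_N}\prod_{j=1}^N c_{\Delta m_j}$; then $\E_{\beta,n}[\prod_j E_j] = (Z/c_n)\,\tilde\E[\prod_j E_j \prod_{\text{pairs}}C_{j\ell}]$, and choosing $u_1=\cdots=u_N=0$ identifies $\tilde\E[\prod_{\text{pairs}}C_{j\ell}] = c_n/Z$. When $|j-\ell|\ge 2$ the sub-walks $j$ and $\ell$ are macroscopically separated in space and time, so their expected number of intersections is $o(1)$ --- this is exactly where $d>4$ is used, via finiteness of the bubble sum $\sum_x|x|^{2(2-d)}$ (finite iff $d>4$) --- and $C_{j\ell}$ may be replaced by $1$ at negligible cost. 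Each adjacent $C_{j,j+1}$ depends on sub-walks $j$ and $j+1$ essentially only through their local structure near the shared cut point $\omega(m_j)$, whereas $E_j,E_{j+1}$ depend on the sub-walks' \emph{macroscopic} displacements; the diffusive (large-scale Gaussian) behaviour of the WSAW for $d>4$ makes these asymptotically independent, so $\tilde\E[\prod_j E_j\prod_{\text{pairs}}C_{j\ell}] = \tilde\E[\prod_j E_j]\,\tilde\E[\prod_{\text{pairs}}C_{j\ell}](1+o(1))$. Hence $\E_{\beta,n}[\prod_j E_j] = \tilde\E[\prod_j E_j](1+o(1)) = \prod_{j=1}^N \hat c_{\Delta m_j}(u_j/\sqrt{D k_n})/c_{\Delta m_j} + o(1)$, which tends to the right-hand side of \eqref{e:fdd_goal}.

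The only point not already in \cite[Chapter 6]{MS93} is that the free segment has length $n-m_N$ possibly not comparable to $k_n$; but it carries no phase, its partition function $c_{n-m_N}\sim A\mu^{n-m_N}$ has the same form as the others, and its cross-interactions with the remaining sub-walks are of the two types just treated, so the estimates apply verbatim. I expect the real work to be the decorrelation step --- making rigorous that the adjacent cross-interactions, governed by the local geometry of a WSAW near a point, are asymptotically independent of the Gaussian macroscopic increments --- together with the careful bookkeeping of the $O(1)$ cross-interaction corrections; all of this is supplied by the lace-expansion estimates of \cite[Chapter 6]{MS93} and \cite{HHS98}, so the task is essentially to transcribe that analysis.
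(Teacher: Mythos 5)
Your architecture (cut the walk at all the times $m_j=\lfloor t_jk_n\rfloor$, factor $K[0,n]$ into internal interactions times cross\--interaction factors $C_{j\ell}$, compare with the product measure $\tilde\E$, and invoke the endpoint CLT for each block) is the natural first idea, but it is not the paper's route, and it leaves the entire difficulty of the lemma unproven. The gap is the step $\tilde\E[\prod_j E_j\prod C_{j\ell}]=\tilde\E[\prod_j E_j]\,\tilde\E[\prod C_{j\ell}](1+o(1))$. Your treatment of non-adjacent pairs is fine (macroscopic separation gives $o(1)$ expected intersections when $d>4$), but for adjacent pairs $\tilde\E[1-C_{j,j+1}]$ is of order $\beta$ uniformly in $n$ --- two independent walks emanating from a common point intersect $O(1)$ times in expectation for $d>4$, not $o(1)$ times --- so $\prod C_{j\ell}$ is a non-vanishing correction, and the asserted asymptotic independence of this correction from the macroscopic phases is exactly the content of the lemma. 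You defer it to ``the lace-expansion estimates of \cite[Chapter 6]{MS93}'', but that chapter does not prove a decorrelation statement in this form; it avoids ever needing one by replacing the inequality $K[0,n]\le K[0,m]K[m,n]$ with the exact identity $K[0,n]=\sum_{I\ni m}K[0,I_1]J[I_1,I_2]K[I_2,n]$, in which all coupling across a cut point is carried by the factor $J[I_1,I_2]$, supported on intervals whose lengths are summable against $|I|z_c^{|I|}$; the phase variation across such a short interval is $1+o(1)$, and the long-interval contribution vanishes by the absolute bound \eqref{e:pi_bound}. Making your decorrelation step rigorous would in practice mean expanding each $C_{j,j+1}$ by precisely this identity, i.e.\ reconstructing the KJK argument you are trying to bypass. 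Also note that a priori $C_{j,j+1}$ involves intersections between \emph{arbitrary} times in the two blocks, not only times near the shared cut point, so ``depends essentially only on the local structure near $\omega(m_j)$'' is itself a claim requiring the same machinery.

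Separately, you are doing far more work than the paper needs. Since every phase involves only the initial segment of $\omega$ up to time $\lfloor t_Nk_n\rfloor\le n$, the paper makes a \emph{single} KJK cut at $m=\lfloor t_Nk_n\rfloor$ to detach the phase-free tail of length $n-I_2$, and then invokes the already-established multi-increment result \cite[Theorem 6.6.2]{MS93} (with $k_n$ in place of $n$) as a black box for the initial piece $\omega_1$ of length $I_1\approx t_Nk_n$. Your proposal instead re-derives the multi-increment structure from scratch via the endpoint CLT, which amounts to re-proving \cite[Theorem 6.6.2]{MS93} by a method that reference does not use. Either prove the adjacent-pair decorrelation honestly (which is the hard part of that theorem), or cite the theorem and reduce to the paper's situation of one extra cut for the free tail.
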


The proof follows from an adaptation (and application) of \cite[Theorem 6.6.2]{MS93} that is given hereafter. While it is true that this theorem has been established for the SAW rather than the WSAW, it readily adapts to the easier case of the WSAW. Furthermore, the most technical part of the proof of the aforementioned result is the scaling limit of the endpoint \cite[Theorem 6.6.1]{MS93} which was proven in the original article \cite{BS85} this time for the WSAW model and we shall therefore use \cite[Theorem 6.6.2]{MS93} in the case $d>4$ and $\beta>0$ small enough. 

\begin{proof}[Proof of Lemma \ref{lem:fdd}]
Let $d>4$, $\beta>0$,  $T>0$ and $k = k_n$ such that $T k_n\leq n$ and $k_n \to \infty$ and fix arbitrary $N \geq 1$,  $0= t_0 < t_1< \cdots <t_N \leq T$ and $u_1,\cdots,u_N \in \R^d$.
For any vector $v$ we let $v^{(k)} := v/\sqrt{D k}$ and introduce further
\begin{equation}
	\Delta^{(k)}_{j}(\omega) = \omega(\lfloor t_jk \rfloor ) - \omega(\lfloor t_{j-1}k \rfloor) 
\end{equation}
where the dependency on $t$ is omitted for simplicity. Then  \eqref{e:fdd_goal} rewrites as
\begin{align}
\label{e:fdd_goal_2}
	\lim_{n \to \infty} c_n^{-1}M_n(u,t)
	&= \exp\Big(-\frac{1}{2d}\sum_{j=1}^N |u_j|^2(t_j-t_{j-1})\Big)	
\end{align}
with
\begin{align}
	M_n(u,t) &= 
	\sum_{\omega \in \Wcal_n} \exp\Big[i\sum_{j=1}^N u_j^{(n)} \cdot\Delta_j^{(k_n)}(\omega)\Big]K[0,n]\nonumber.
\end{align}

In order to prove \eqref{e:fdd_goal_2} we use a ``KJK expansion" to decouple the two parts of the walk before and after time $\lfloor k_nt_N\rfloor$. For this we appeal to Lemma 5.2.5 in \cite{MS93} where it is shown that for any integer $m \in [0,n]$ 
\begin{equation}
\label{e:KJK}
	K[0,n] = \sum_{I \ni m}K[0,I_1]J[I_1,I_2]K[I_2,n]
\end{equation}
where the sum is over intervals $I$ of the form $[I_1,I_2]$ where either $0 \leq I_1 < m < I_2 \leq b$ or $I_1 = I_2 = m$. We do not really need the definition of $J[s,t]=J[s,t](\omega)$ which can be found in \cite{BS85,MS93,Slad06} but rather only need to know that it is an interaction term (like $K[s,t]$) that only depends on $\omega$ between times $s$ and $t$ and satisfies
\begin{equation}
\label{e:pi_bound}
	\sum_{n=0}^\infty n z_c^n \sum_{\omega \in \Wcal_n} |J[0,n]| < \infty
\end{equation}
where $z_c = \mu^{-1}$ as was originally proven in \cite[Section 5]{BS85}. We refer to \cite[Theorem 6.2.9]{MS93} for a more concise and systematic treatment of this fact and other related \emph{diagrammatic estimates}.
The input of \eqref{e:KJK} gives 
\begin{align}
	M_n(u,t) 
	&= \sum_{I \ni \lfloor k_nt\rfloor} \sum_{\omega \in \Wcal_n}  \exp\Big[i\sum_{j=1}^N u_j^{(n)} \cdot\Delta_j^{(k_n)}(\omega)\Big]K[0,I_1]J[I_1,I_2]K[I_2,n].
\end{align}
We split the walk $\omega$ into three pieces from times $0$ to $I_1$, from $I_1$ to $I_2$ and from $I_2$ to $n$. We denote these walks $\omega_1,\omega_2$ and $\omega_3$ respectively and see from the above decoupling that 
\begin{align}
	M_n(u,t) &= \sum_{I \ni \lfloor k_nt\rfloor} \sum_{\substack{\omega_1 \in \Wcal_{I_1} \\ \omega_2 \in \Wcal_{I_2-I_1}}}\exp\Big[i\sum_{j=1}^N u_j^{(n)} \cdot\Delta_j^{(k_n)}(\omega_1 \circ \omega_2)\Big]K[0,I_1]J[0,I_2-I_1]c_{n-I_2}
\end{align}
where $\omega_1 \circ \omega_2$ is the the walk obtained by concatenation of $\omega_1$ and $\omega_2$.
We show that the main contribution to the above sum is given by those intervals $I$ such that $|I| = I_2-I_1 \leq b_n$ where $b_n$ ultimately needs to 
satisfy $b_n = o(k_n^{1/2})$ and $b_n \to \infty$. Consider first the 
case $|I| \leq b_n$ (resp. $|I| > b_n$) and denote by $M^{\leq}(u,n)$ (resp $M^{>}(u,n)$) the corresponding 
sum restricted to this range of $|I|$ such that $M_n(u,t) = M^{\leq}_n(u,t) + M_n^{>}(u,t)$. In this case for $n$ large enough we have $I_1 > k_nt_{N-1}$ and $I_1 \in [k_nt_N(1-{k_n}^{-1/2}),  k_nt_N]$ $I_1 \in [k_nt_N(1-{k_n}^{-1/2}),  k_nt_N]$ uniformly in $I \leq b_n$ which implies that $\Delta_j^{(k_n)}(\omega_1 \circ \omega_2) = \Delta_j^{(k_n)}(\omega_1)$ for all $1 \leq j \leq N-1$.   By adding and substracting $iu_N^{(n)}\cdot \omega_1(I_1)$ inside the exponential we see that \cite[Theorem 6.6.2]{MS93} applies (with $k_n$ in place of $n$) for the sum over $\omega_1$ and gives 
\begin{align}
	\sum_{\omega_1 \in \Wcal_{I_1}}
\exp\Big[i\sum_{j=1}^{N-1} u_j^{(n)} \cdot\Delta_j^{(k_n)}(\omega_1 ) + iu_N^{(n)} &\cdot(\omega_1(I_1) - \omega_1(\lfloor k_nt_{N-1}\rfloor))\Big]K[0,I_1] \nnb 
&= c_{I_1}\exp(-\frac{1}{2d}\sum_{j=1}^{N}|u_j|^2(t_j-t_{j-1}))(1+o(1))
\end{align}
and the $o(1)$ goes to zero as $n\to \infty$ uniformly in $|I| \leq b_n$. For the sum over $\omega_2$ we note that
\begin{equation}
\exp(iu_N^{(n)} \cdot\omega_2(\lfloor k_nt_N\rfloor-I_1) = 1+o(1)
\end{equation}
uniformly in $|I| \leq b_n$ so that overall
\begin{align}
M^{\leq}_n(u,t) 
&= (1+o(1))\exp(-\frac{1}{2d}\sum_{j=1}^{N}|u_j|^2(t_j-t_{j-1}))\sum_{\substack{I \ni \lfloor k_nt\rfloor \\ |I| \leq b_n}} c_{n-I_2}c_{I_1} \sum_{ \omega_2 \in \Wcal_{|I|}}
J[0,I_2-I_1].
\end{align}
By using \eqref{e:KJK} again and dividing both sides by $c_n$ this gives 
\begin{align}
\label{e:M_small_I}
c_n^{-1}M^{\leq}_n(u,t) 
&= \exp(-\frac{1}{2d}\sum_{j=1}^{N}|u_j|^2(t_j-t_{j-1}))(1+o(1)) -O(1)\sum_{\substack{I \ni \lfloor k_nt\rfloor \\ |I| > b_n}} \frac{c_{n-I_2}c_{I_1}}{c_n} \sum_{ \omega_2 \in \Wcal_{|I|}}J[0,|I|].
\end{align}
Similarly for $M_n^{>}(u,t)$, but treating the exponential in the summand simply as a $O(1)$, we obtain directly by \eqref{e:KJK} that 
\begin{equation}
\label{e:M_large_I}
	c_n^{-1}M_n^{>}(u,t) = O(1)\sum_{\substack{I \ni \lfloor k_nt\rfloor \\ |I| > b_n}} \frac{c_{n-I_2}c_{I_1}}{c_n}  \sum_{ \omega_2 \in \Wcal_{|I|}}J[0,|I|].
\end{equation}

By summing both contributions \eqref{e:M_small_I} and \eqref{e:M_large_I} we see that to conclude it is enough to show that $c_n^{-1}M_n^{>}(u,t) \to 0$ as $n\to \infty$. To see this we use the fact that $c_n = O(z_c^{-n})$ from \eqref{e:cnasy} to obtain
\begin{equation}
	c_n^{-1}M_n^{>}(u,t)
	= O(1)\sum_{\substack{I \ni \lfloor k_nt\rfloor \\ |I| > b_n}} z_c^{|I|}  \sum_{ \omega_2 \in \Wcal_{|I|}}J[0,|I|].
\end{equation}
Finally we see that $|I|$ being fixed there are at most $|I|+1$ ways to choose $I \ni \lfloor k_nt\rfloor$ so that
\begin{equation}
	c_n^{-1}M_n^{>}(u,t)
	= O(1)\sum_{\substack{|I| = b_n+1}}^\infty |I| z_c^{|I|}  \sum_{ \omega_2 \in \Wcal_{|I|}}J[0,|I|].
\end{equation}
The absolute bound \eqref{e:pi_bound}
achieves the proof since $b_n \to \infty$ and the sum is convergent.
\end{proof}

\subsubsection{Tightness}
We use the following criterion taken from \cite[Corollary 16.9]{Kal21_2nd} and adapted to our purposes to obtain tightness of random continuous processes under some moment condition. The proof of \eqref{e:tightness} itself resembles that of \cite{Slad88} where the proof of tightness for the ordinarily rescaled SAW originates. We note that the criterion used here is slightly simpler than the one used in \cite{Slad88}.
\begin{lemma}[Tightness of $Y^{(n)}(\omega)$] 
\label{lem:tightness}
Let $d>4$, $\beta>0$ small enough, $T>0$ and $\omega\sim \beta$-{\rm WSAW}$_n$ then the sequence $(Y^{(n)}_t(\omega))_{0 \leq t \leq T}$ is tight if there exists $A>0$ such that for all $0 \leq s < t \leq T$ and uniformly in $n$
\begin{equation}
\label{e:tightness}
	\E_{\beta,n} |Y^{(n)}_{t}(\omega)-Y^{(n)}_{s}(\omega)|^2 \leq A |s-t|.
\end{equation}
\end{lemma}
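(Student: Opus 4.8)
The plan is to verify the hypotheses of the tightness criterion of \cite[Corollary 16.9]{Kal21_2nd}. First, since $Y^{(n)}_0(\omega)=0$ for every $n$ and every $\omega$, the time-$0$ marginals are all the point mass at the origin, so that family is trivially tight and only an increment moment bound remains to be checked. The supplied bound \eqref{e:tightness} is of the right form but carries only the first power of $|s-t|$, which is not enough to run the Kolmogorov--Chentsov-type argument underlying \cite[Corollary 16.9]{Kal21_2nd}; the substance of the proof is therefore to upgrade it to the fourth-moment estimate
\[
	\E_{\beta,n}\big[\,\big|Y^{(n)}_t(\omega)-Y^{(n)}_s(\omega)\big|^4\,\big]\;\le\;C\,|s-t|^2 \qquad (0\le s<t\le T),
\]
with $C$ independent of $n$ (and of $r$). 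Since $1+1=2>1$, a bound of this shape is exactly what the criterion asks for, and tightness of $\big(Y^{(n)}(\omega)\big)$ in $\Ccal_0^0([0,T],\R^d)$ follows.

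To establish the fourth-moment estimate I would argue by dyadic recursion in $h=t-s$. Writing $m=(s+t)/2$, $X_1=Y^{(n)}_m-Y^{(n)}_s$ and $X_2=Y^{(n)}_t-Y^{(n)}_m$, expanding $|X_1+X_2|^4$ and applying Cauchy--Schwarz to the mixed terms reduces the bound on $\E_{\beta,n}\big[|Y^{(n)}_t-Y^{(n)}_s|^4\big]$ to the same quantity at scale $h/2$ together with the disjoint-increment product bound $\E_{\beta,n}\big[\,|X_1|^2\,|X_2|^2\,\big]\le C\,h^2$. For the latter I would invoke the submultiplicativity of the interaction: because every factor $1+\beta U_{st}(\omega)$ lies in $(0,1]$, one has $K[0,n]\le K[0,a]\,K[a,b]\,K[b,c]\,K[c,n]$, where $a\le b\le c$ are the lattice times in $\{0,\dots,n\}$ closest to $r^2s,\,r^2m,\,r^2t$. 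This decouples the sum over walks defining $\E_{\beta,n}\big[|X_1|^2|X_2|^2\big]$ into three independent sums; after dividing by $c_n$ the leftover ratio $c_a\,c_{b-a}\,c_{c-b}\,c_{n-c}/c_n$ is bounded by an $n$-independent constant thanks to the asymptotics \eqref{e:cnasy}, and what remains is a product of two mean-square displacements, over walks of lengths $b-a$ and $c-b$ respectively, each of which is $\le C\,h/2$ by \eqref{e:tightness} applied with the shorter length in place of $n$ (legitimate since \eqref{e:tightness} is assumed uniformly in $n$) together with the interpolation bound \eqref{e:varphi_neglig}. Feeding this into the expansion and taking the recursion constant large enough, the induction closes, the recursion descending until the scale drops below $r^{-2}$: there $Y^{(n)}$ is piecewise linear of Euclidean speed exactly $r$ by \eqref{e:interp}--\eqref{e:varphi_neglig}, so $|Y^{(n)}_t-Y^{(n)}_s|\le r|s-t|$ holds deterministically and hence $\E_{\beta,n}\big[|Y^{(n)}_t-Y^{(n)}_s|^4\big]\le r^4|s-t|^4\le|s-t|^2$ whenever $|s-t|\le r^{-2}$.

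The main obstacle is precisely the disjoint-increment product bound $\E_{\beta,n}\big[|X_1|^2|X_2|^2\big]\le Ch^2$: it is there that one must simultaneously (i) dispose of the dependence between the two increments -- the exact role of the submultiplicativity estimate together with the uniform control of the partition-function ratios via \eqref{e:cnasy} -- and (ii) absorb the linear-interpolation corrections $\varphi_{r,n,t}(\omega)$ of \eqref{e:interp}, including the regime $|s-t|<r^{-2}$ in which the increment consists entirely of interpolation, so that the final constant is genuinely uniform in $n$ and $r$ and not merely valid for the underlying walk sampled at lattice times. Note that the product bound, via Markov's inequality, also furnishes the pairwise-increment estimate of Billingsley--Prohorov type, so it feeds \cite[Corollary 16.9]{Kal21_2nd} whatever its precise phrasing. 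Once the fourth-moment estimate is in hand, the rest is the bookkeeping of the dyadic sum and a direct appeal to that corollary.
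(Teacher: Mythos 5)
Your proposal is correct in substance but takes a genuinely different, and more demanding, route than the paper. The paper's proof of Lemma~\ref{lem:tightness} is essentially a citation: it reads the lemma as (an adaptation of) the moment criterion of \cite[Corollary 16.9]{Kal21_2nd}, and all of the work that follows is devoted to verifying the hypothesis \eqref{e:tightness} via the factorization $K[0,n]\le K[0,\lfloor r^2s\rfloor]K[\lfloor r^2s\rfloor,\lfloor r^2t\rfloor]K[\lfloor r^2t\rfloor,n]$, the ratio bound $c_{\lfloor r^2s\rfloor}c_{\lfloor r^2t\rfloor-\lfloor r^2s\rfloor}c_{n-\lfloor r^2t\rfloor}/c_n=O(1)$ from \eqref{e:cnasy}, the mean-square displacement \eqref{e:mean_square_disp}, and the interpolation bookkeeping \eqref{e:interp}--\eqref{e:varphi_neglig}. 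You instead decline to feed an exponent-$1$ bound into a Kolmogorov--Chentsov criterion and upgrade it to $\E_{\beta,n}|Y^{(n)}_t-Y^{(n)}_s|^4\le C|t-s|^2$, the engine being the adjacent-disjoint-increment product bound $\E_{\beta,n}[|X_1|^2|X_2|^2]\le Ch^2$ obtained by applying the same decoupling once more (four blocks instead of three). Your instinct is sound and your extra step is not redundant: the moment criterion genuinely requires $\E[\,\cdot\,^a]\le C|t-s|^{1+b}$ with $b>0$, and a second-moment bound with exponent exactly $1$ does not by itself imply tightness of a sequence of continuous processes (a travelling tent of height $1$ and width $1/n$ centred at a uniform point satisfies \eqref{e:tightness} uniformly in $n$ yet has modulus of continuity $1$ at scale $1/n$). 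So your argument supplies the WSAW-specific input that makes the lemma valid, and it is in effect the original tightness argument of \cite{Slad88} and \cite[Section 6.6]{MS93}, where tightness is deduced from the product bound over adjacent disjoint time intervals (a Billingsley-type criterion) rather than from a single-increment second moment. Two repairs to your write-up: (i) a naive Cauchy--Schwarz on the mixed term $\E[|X_1|^3|X_2|]$ produces a sixth moment, so you must pair it as $\E[|X_1|^2\cdot|X_1||X_2|]\le(\E|X_1|^4)^{1/2}(\E|X_1|^2|X_2|^2)^{1/2}$ (or use Young's inequality) so that only the recursed quantity and the product bound appear, after which the induction still closes for a large enough constant; (ii) the dyadic recursion is avoidable altogether, since once the product bound is in hand the Billingsley criterion applies directly, as you yourself observe in your closing remark.
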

We now prove that \eqref{e:tightness} holds therefore completing the proof of Proposition \ref{prop:Y_to_B} by combining Lemma \ref{lem:fdd} and Lemma \ref{lem:tightness}. Let $0\leq s<t \leq T$ and suppose first that $t_r = s_r$ where we recall that for any real $u$, we let $u_r = \lfloor r^2 u\rfloor / r^2$. Then we have by \eqref{e:interp}--\eqref{e:varphi_neglig} that for any walk $\omega \in \Wcal_n$
\begin{equation}
	|Y^{(n)}_t(\omega)-Y^{(n)}_s(\omega)| = (r^2t-r^2s)|Y^{(n)}_{t_r+r^{-2}}(\omega)-Y^{(n)}_{t_r}(\omega)| \leq r(t-s).
\end{equation}
and since $t_r=t_s$ implies $r \leq |t-s|^{-1/2}$ the result follows with $A = 1$ by squaring the above equation and taking the expectation on both sides. 

Now we suppose that $t_r \neq s_r$, then by \eqref{e:interp}--\eqref{e:varphi_neglig} again, we can write
\begin{align}
	|Y^{(n)}_t(\omega)-Y^{(n)}_s(\omega)| 
	&\leq |Y^{(n)}_{t_r}(\omega)-Y^{(n)}_{s_r}(\omega)| + |\varphi_{n,t,r}(\omega)|+|\varphi_{n,s,r}(\omega)| \nnb
	&\leq |Y^{(n)}_{t_r}(\omega)-Y^{(n)}_{s_r}(\omega)| + \frac{2}{r}.
\end{align}
Since $t_r \neq s_r$ implies $r \geq |t-s|^{-1/2}$ we further obtain using $(a+b)^2 \leq 2(a^2+b^2)$ that
\begin{align}
	\E_{\beta,n} |Y^{(n)}_{t}(\omega)-Y^{(n)}_{s}(\omega)|^2 
	&\leq 2\E_{\beta,n} |Y^{(n)}_{t_r}(\omega)-Y^{(n)}_{s_r}(\omega)|^2 + 8|t-s|.
\end{align}
And thus to conclude with the proof of \eqref{e:tightness} we see that it is enough to show that 
\begin{equation}
\label{e:tightness_suffic}
	\E_{\beta,n} |Y^{(n)}_{t_r}(\omega)-Y^{(n)}_{s_r}(\omega)|^2 \leq A|t-s|.
\end{equation} 
By definition the left-hand side of \eqref{e:tightness_suffic} is equal to
\begin{align}
\label{e:tightness_suffic_2}
	\frac{1}{r^2 c_n}\sum_{\omega \in \Wcal_n}|\omega(\lfloor r^2t\rfloor)-\omega(\lfloor r^2s\rfloor)|^2K[0,n].
\end{align}
Using the fact that $K[0,n] \leq K[0,\lfloor r^2 s \rfloor]\,K[\lfloor r^2 s \rfloor,\lfloor r^2 t \rfloor]\,K[\lfloor r^2 t \rfloor,n]$, decomposing $\omega$ into three subwalks corresponding to these time intervals and using that ($A$ may change from line to line)
\begin{equation}
	c^{-1}_n \leq A c^{-1}_{\lfloor r^{2}s\rfloor}c^{-1}_{\lfloor r^{2}t\rfloor-\lfloor r^2 s\rfloor} c^{-1}_{n-\lfloor r^2 t\rfloor} 
\end{equation}
from \cite[Theorem 6.1.1(a)]{MS93} we get that \eqref{e:tightness_suffic_2} satisfies
\begin{equation}
	\frac{1}{r^2 c_n}\sum_{\omega \in \Wcal_n}|\omega(\lfloor r^2t\rfloor)-\omega(\lfloor r^2s\rfloor)|^2K[0,n] 
	\leq \frac{A}{r^2}\E_{\lfloor r^2t\rfloor-\lfloor r^2s\rfloor,\beta}|\omega(\lfloor r^2t\rfloor-\lfloor r^2s\rfloor)|^2.
\end{equation}
which by the bound on the mean square displacement in \eqref{e:mean_square_disp} gets bounded further by $A |t-s|$
and thus achieves the proof.
\section*{Acknowledgements}

This work was supported in part by NSERC of Canada.
We thank Gordon Slade for introducing this topic to us, for our collaboration on \cite{MS22} where the idea of this paper idea arose as a natural continuation as well as for useful comments on an earlier version of this paper.

\bibliography{lg}
\bibliographystyle{plain}

\end{document}